\newif \ifSIAM
\SIAMfalse

\ifSIAM
\documentclass[review]{siamart251216}
\newsiamremark{remark}{Remark}
\AddToHook{env/remark/begin}{\crefalias{theorem}{remark}}
\else
\documentclass[a4paper]{article}
\usepackage[margin=3.5cm]{geometry}
\usepackage{amsthm}
\usepackage{hyperref}
\hypersetup{
	colorlinks,
	linkcolor=blue,
	citecolor=blue,
	urlcolor=blue,
}
\theoremstyle{plain}%
\newtheorem{theorem}{Theorem}[section]%
\newtheorem{lemma}[theorem]{Lemma}%
\newtheorem{remark}[theorem]{Remark}%
\fi
\let\citet\cite

\usepackage{import}
\usepackage{amsmath}
\usepackage{mathtools}
\usepackage{amssymb}
\usepackage{mathrsfs}
\usepackage{xparse}

\makeatletter
\@ifpackageloaded{unicode-math}{%
	\newcommand{\mymathbold}{\symbf}%
}{%
	\usepackage{bm}%
	\newcommand{\mymathbold}{\bm}%
}
\makeatother

\newcommand{\scrbar}[1]{\overline{\mathcal{#1}}}
\newcommand{\scrhat}[1]{\widehat{\mathcal{#1}}}
\newcommand{\scrtl}[1]{\widetilde{\mathcal{#1}}}

\ExplSyntaxOn

\cs_new_protected:Nn \bamboo_define:nnnnnN
{
	\cs_new_protected:cpx { #3 #1 #4 } { \exp_not:N #5{#6{#2}} }
}

\int_step_inline:nnn { `A } { `Z }
{
	\bamboo_define:nnnnnN
	{ \char_generate:nn { #1 } { 11 } }
	{ \char_generate:nn { #1 } { 11 } }
	{ bl }
	{ }
	{ \mymathbold }
	\use:n
	
	\bamboo_define:nnnnnN
	{ \char_generate:nn { #1 } { 11 } }
	{ \char_generate:nn { #1 } { 11 } }
	{ scr }
	{ }
	{ \mathcal }
	\use:n
	
	\bamboo_define:nnnnnN
	{ \char_generate:nn { #1 } { 11 } }
	{ \char_generate:nn { #1 } { 11 } }
	{ }
	{ hat }
	{ \widehat }
	\use:n
	
	\bamboo_define:nnnnnN
	{ \char_generate:nn { #1 } { 11 } }
	{ \char_generate:nn { #1 } { 11 } }
	{ scr }
	{ hat }
	{ \scrhat }
	\use:n
	
	\bamboo_define:nnnnnN
	{ \char_generate:nn { #1 } { 11 } }
	{ \char_generate:nn { #1 } { 11 } }
	{ }
	{ br }
	{ \overline }
	\use:n
	
	\bamboo_define:nnnnnN
	{ \char_generate:nn { #1 } { 11 } }
	{ \char_generate:nn { #1 } { 11 } }
	{ }
	{ tl }
	{ \widetilde }
	\use:n
	
	\bamboo_define:nnnnnN
	{ \char_generate:nn { #1 } { 11 } }
	{ \char_generate:nn { #1 } { 11 } }
	{ scr }
	{ br }
	{ \scrbar }
	\use:n
	
	\bamboo_define:nnnnnN
	{ \char_generate:nn { #1 } { 11 } }
	{ \char_generate:nn { #1 } { 11 } }
	{ scr }
	{ tl }
	{ \scrtl }
	\use:n
	
	\bamboo_define:nnnnnN
	{ \char_generate:nn { #1 } { 11 } }
	{ \char_generate:nn { #1 } { 11 } }
	{ }
	{ dt }
	{ \dot}
	\use:n
}
\int_step_inline:nnn { `a } { `z }
{
	\bamboo_define:nnnnnN
	{ \char_generate:nn { #1 } { 11 } }
	{ \char_generate:nn { #1 } { 11 } }
	{ bl }
	{ }
	{ \mymathbold }
	\use:n
	
	\bamboo_define:nnnnnN
	{ \char_generate:nn { #1 } { 11 } }
	{ \char_generate:nn { #1 } { 11 } }
	{ }
	{ hat }
	{ \hat }
	\use:n
	
	\bamboo_define:nnnnnN
	{ \char_generate:nn { #1 } { 11 } }
	{ \char_generate:nn { #1 } { 11 } }
	{ }
	{ br }
	{ \bar }
	\use:n
	
	\bamboo_define:nnnnnN
	{ \char_generate:nn { #1 } { 11 } }
	{ \char_generate:nn { #1 } { 11 } }
	{ }
	{ tl }
	{ \tilde }
	\use:n                            
	
	\bamboo_define:nnnnnN
	{ \char_generate:nn { #1 } { 11 } }
	{ \char_generate:nn { #1 } { 11 } }
	{ }
	{ dt }
	{ \dot}
	\use:n
}
\clist_map_inline:nn
{
	Gamma,Delta,Theta,Lambda,Xi,Pi,Sigma,Phi,Psi,Omega
}
{
	\bamboo_define:nnnnnN
	{ #1 }
	{ #1 }
	{ bl }
	{ }
	{ \mymathbold }
	\use:c
	
	\bamboo_define:nnnnnN
	{ #1 }
	{ #1 }
	{ op }
	{ }
	{ \op }
	\use:c
	
	\bamboo_define:nnnnnN
	{ #1 }
	{ #1 }
	{ scr }
	{ }
	{ \mathcal }
	\use:c
	
	\bamboo_define:nnnnnN
	{ #1 }
	{ #1 }
	{ }
	{ hat }
	{ \widehat }
	\use:c
	
	\bamboo_define:nnnnnN
	{ #1 }
	{ #1 }
	{ }
	{ br }
	{ \overline }
	\use:c
	
	\bamboo_define:nnnnnN
	{ #1 }
	{ #1 }
	{ }
	{ tl }
	{ \widetilde }
	\use:c
	
	\bamboo_define:nnnnnN
	{ #1 }
	{ #1 }
	{ }
	{ dt }
	{ \dot}
	\use:c
	
}
\clist_map_inline:nn
{
	alpha,beta,gamma,delta,epsilon,zeta,eta,theta,iota,kappa,
	lambda,mu,nu,xi,pi,rho,sigma,tau,phi,chi,psi,omega
}
{
	\bamboo_define:nnnnnN
	{ #1 }
	{ #1 }
	{ bl }
	{ }
	{ \mymathbold }
	\use:c
	
	\bamboo_define:nnnnnN
	{ #1 }
	{ #1 }
	{ }
	{ hat }
	{ \hat }
	\use:c
	
	\bamboo_define:nnnnnN
	{ #1 }
	{ #1 }
	{ }
	{ br }
	{ \bar }
	\use:c
	
	\bamboo_define:nnnnnN
	{ #1 }
	{ #1 }
	{ }
	{ tl }
	{ \tilde }
	\use:c
	
	\bamboo_define:nnnnnN
	{ #1 }
	{ #1 }
	{ }
	{ dt }
	{ \dot}
	\use:c
}

\ExplSyntaxOff

\DeclareMathOperator{\E}{\mathbf{E}}

\newcommand{\rank}{\operatorname{rank}}

\newcommand{\tr}{\operatorname{tr}}

\DeclarePairedDelimiter{\norm}{\lVert}{\rVert}

\DeclarePairedDelimiter{\abs}{\lvert}{\rvert}

\DeclarePairedDelimiter{\parens}{(}{)}
\DeclarePairedDelimiter{\brackets}{[}{]}
\DeclarePairedDelimiterX{\ip}[2]{\langle}{\rangle}{#1,#2}

\DeclarePairedDelimiterXPP{\normsub}[2]{}{\lVert}{\rVert}{_{#2}}{#1}
\DeclarePairedDelimiterXPP{\ipsub}[3]{}{\langle}{\rangle}{_{#3}}{#1,#2}

\DeclarePairedDelimiterXPP{\ipHS}[2]{}{\langle}{\rangle}{_{\mathrm{HS}}}{#1, #2}
\DeclarePairedDelimiterXPP{\normHS}[1]{}{\lVert}{\rVert}{_{\mathrm{HS}}}{#1}

\DeclarePairedDelimiterXPP{\ipF}[2]{}{\langle}{\rangle}{_{\mathrm{F}}}{#1, #2}
\DeclarePairedDelimiterXPP{\normF}[1]{}{\lVert}{\rVert}{_{\mathrm{F}}}{#1}

\DeclarePairedDelimiterXPP{\dkl}[2]{\operatorname{D_{KL}}}{(}{)}{}{#1 \: \delimsize\Vert \: #2}

\DeclarePairedDelimiterXPP{\restr}[2]{}{{}}{\vert}{_{#2}}{#1}

\newcommand{\transpose}{^\top\! }

\newcommand{\R}{\mathbf{R}}

\newcommand{\range}{\operatorname{range}}

\newcommand{\negqquad}{\mspace{-36mu}}

\newcommand{\stquad}{\quad\text{s.t.}\quad}

\usepackage{bookmark}
\usepackage[capitalize]{cleveref}

\DeclarePairedDelimiterXPP{\opnorm}[1]{}{\lVert}{\rVert}{_{\mathrm{op}}}{#1}
\DeclarePairedDelimiterXPP{\nucnorm}[1]{}{\lVert}{\rVert}{_{\mathrm{nuc}}}{#1}

\newcommand{\rgt}{r_*}
\newcommand{\ropt}{r}
\newcommand{\Mst}{M_*}
\newcommand{\Ust}{U_*}
\newcommand{\Vst}{V_*}
\newcommand{\Wst}{W_*}
\newcommand{\Ubal}{U_{\mathrm{bal}}}
\newcommand{\Vbal}{V_{\mathrm{bal}}}
\newcommand{\Ubaldt}{\Udt_{\mathrm{bal}}}
\newcommand{\Vbaldt}{\Vdt_{\mathrm{bal}}}

\newcommand{\symms}{\mathbf{S}}

\newcommand{\Tst}{\scrT}
\newcommand{\PTst}{\scrP_{\Tst}}

\newcommand{\symspace}{\symms_d}
\newcommand{\psdspace}{\symms_d^+}
\newcommand{\Uspace}{\R^{d \times \ropt}}
\newcommand{\Mspace}{\R^{d_1 \times d_2}}
\newcommand{\Mspaced}{\R^{d \times d}}

\newcommand{\Lspace}{\R^{d_1 \times \ropt}}
\newcommand{\Rspace}{\R^{d_2 \times \ropt}}

\newcommand{\Lstspace}{\R^{d_1 \times \rgt}}
\newcommand{\Rstspace}{\R^{d_2 \times \rgt}}

\newcommand{\longtitle}{Low-rank matrix recovery landscapes beyond RIP\\with application to rank-one measurements}
\newcommand{\shorttitle}{Low-rank matrix recovery landscapes beyond RIP}
\newcommand{\fundingack}{This work was supported by Hi!~PARIS and the ANR/France 2030 program (ANR-23-IACL-0005).}
\newcommand{\myaddress}{CERMICS, ENPC, Institut Polytechnique de Paris,	CNRS, Marne-la-Vallée, France}
\newcommand{\myemail}{andrew.mcrae@enpc.fr}

\ifSIAM
\title{\longtitle%
	\thanks{Submitted to the editors September 5, 2026.%
	\funding{\fundingack{}}}}

\author{Andrew D.\ McRae%
	\thanks{\myaddress{} (\email{\myemail}).}%
}

\headers{\shorttitle}{A. D. McRae}

\else
\title{\longtitle}
\author{Andrew D.\ McRae\thanks{\myaddress{} (\texttt{\myemail}). \fundingack{}}}
\fi

\begin{document}
\maketitle
\begin{abstract}
	We study the problem of low-rank matrix recovery from linear measurements via the global nonconvex landscape of a low-rank factored formulation of the matrix LASSO (nuclear-norm--regularized least-squares).
	If the landscape is benign, that is, has no bad local optima, then practical and scalable algorithms can compute good statistical estimates.
	Previous state-of-the-art landscape guarantees have typically assumed that the linear measurement operator has the restricted isometry property,
	that is, the operator is approximately an isometry over all low-rank matrices.
	This is an unrealistic assumption for many applications; in particular, when the individual measurement matrices are themselves low-rank,
	we typically have poor upper isometry constants.
	To overcome this, we establish new guarantees of a benign landscape under a weaker isometry condition:
	rather than requiring upper isometry over all low-rank matrices,
	we only require it over the linear low-rank tangent space to the low-rank ground truth matrix.
	To illustrate the utility of this result, we apply it to the problem of matrix recovery from random rank-one linear measurements;
	via high-probability concentration bounds on the random measurement operator,
	we prove a novel landscape guarantee 
	with statistically near-optimal sample complexity and recovery error.
\end{abstract}
\ifSIAM
\begin{keywords}
	Low-rank matrix recovery, nonconvex optimization landscape, rank-one measurements, matrix LASSO
\end{keywords}

\begin{MSCcodes}
	15A29, 15A83, 62J07, 90C26, 90C46
\end{MSCcodes}
\fi

\newcommand{\errbdcomps}{[ \sqrt{\rgt} \lambda  + \sqrt{\ropt + \rgt}(\opnorm{\scrA^*(\xi)} - \lambda)_+ ]}
\newcommand{\y}{y}
\section{Introduction and main results}
\label{sec:intro}
We consider the problem of estimating a low-rank matrix $\Mst \in \Mspace$
from $n$ potentially noisy linear measurements of the form
\begin{equation*}
	\y_i = \ip{A_i}{\Mst} + \xi_i, \quad i = 1, \dots, n,
\end{equation*}
where $A_1, \dots, A_n \in \Mspace$ are known matrices, $\ip{\cdot}{\cdot}$ denotes the standard (trace) inner product on matrices,
and $\xi_1, \dots, \xi_n$ represent measurement (and/or modeling) error.
Letting $\scrA \colon \Mspace \to \R^n$ be the linear operator defined by $\scrA(M)_i = \ip{A_i}{M}$,
we can write our measurement model more compactly as
\begin{equation}
	\label{eq:ms_model}
	\y = \scrA(\Mst) + \xi \in \R^n.
\end{equation}
For the purpose of practically computing an estimate to the unknown $\Mst$ from the data $(\scrA, \y)$,
we study the following regularized factored low-rank least-squares problem:
\begin{equation}
	\min_{\substack{U\in\Lspace \\ V\in\Rspace}}~\frac{1}{2}\norm{\scrA(UV\transpose)-\y}^{2} + \lambda \frac{\normF{U}^2 + \normF{V}^2}{2}, \label{eq:lasso_fact}
\end{equation}
where $\norm{\cdot}$ denotes the ordinary Euclidean norm on $\R^n$,
$\normF{\cdot}$ denotes the Frobenius (elementwise Euclidean) norm on matrices,
$r \geq 1$ is an integer search rank hyperparameter, and $\lambda \geq 0$ is a regularization hyperparameter.

The nonconvex optimization problem \eqref{eq:lasso_fact} is a popular practical choice for low-rank matrix recovery; conceptually, this arises in two ways often separate in the literature:
\begin{enumerate}
	\item In the case $\lambda > 0$, it is a more computationally scalable alternative to the \emph{matrix LASSO}, which is the problem
	\begin{equation}
		\label{eq:lasso}
		\min_{M\in\Mspace}~\frac{1}{2}\norm{\scrA(M)-\y}^{2}+\lambda\nucnorm M,
	\end{equation}
	where $\nucnorm{\cdot}$ denotes the matrix nuclear norm.
	This is popular due to its convexity,
	and there is a rich literature of statistical guarantees (many of them optimal) under a wide variety of measurement types.
	However, for large problem sizes, \eqref{eq:lasso} is computationally challenging to solve directly via convex programming; hence \eqref{eq:lasso_fact} is often preferred as a heuristic.
	See the survey \cite{Davenport2016} for an overview and many historical references.

	\item In the case $\lambda = 0$, the problem \eqref{eq:lasso_fact} is a smooth reparametrization of the rank-constrained matrix least-squares problem
	\begin{equation*}
		\min_{M \in \Mspace}~\frac{1}{2} \norm{\scrA(M) - \y}^2 \stquad \rank(M) \leq r.
	\end{equation*}
	Without other regularization (such as the nuclear norm in \eqref{eq:lasso}),
	the rank parameter controls the statistical model complexity and is essential for identifiability whenever $n < d_1 d_2$.
	See the surveys \cite{Davenport2016,Chi2019} for further background.
\end{enumerate}
More discussion of the relative strengths of each perspective can be found in the recent work \cite{McRae2026preprint}.

For algorithmic purposes, the principal difficulty of \eqref{eq:lasso_fact} is its nonconvexity;
standard local descent methods could be trapped by spurious local optima.
There is a wide variety of theoretical and algorithmic approaches to resolve the problem of spurious local optima.
See, again, \cite{Chi2019} for an overview; we discuss in \Cref{sec:intro_ms} below the work which is most relevant to our setting.

In this paper, we focus on the nonconvex \emph{landscape} of \eqref{eq:lasso_fact};
we want to show that it is \emph{benign}.
That is, we seek to show that, under certain assumptions on the model \eqref{eq:ms_model},
the problem \eqref{eq:lasso_fact} has no spurious local optima
or at least, from a statistical perspective, that all local optima yield good estimates of $\Mst$.
For many problems, this type of analysis has yielded theoretical guarantees under (near-)optimal statistical assumptions.

The majority of landscape analysis for \eqref{eq:lasso_fact} in the literature assumes that $\scrA$ has the \emph{restricted isometry property} (RIP).
A historical sampling of this work is \cite{Bhojanapalli2016,Park2017,Ge2017,Li2019,Ha2020,Zhu2021a,Zhang2025a,Ouyang2025}.
For an integer $k \geq 1$ and scalars $\beta \geq \alpha > 0$, we say that $\scrA$ has $(k, \alpha, \beta)$-RIP%
\footnote{The usual definition of RIP sets $\alpha = 1 - \delta$ and $\beta = 1 + \delta$ for some $\delta \in [0, 1)$. This is equivalent up to rescaling of $\scrA$.}
if, for all matrices $H \in \Mspace$ with rank at most $k$,
\begin{equation*}
	\alpha \normF{H}^2 \leq \norm{\scrA(H)}^2 \leq \beta \normF{H}^2.
	\tag{RIP}
	\label{eq:RIP}
\end{equation*}
Under this assumption, a state-of-the-art landscape guarantee for statistical recovery is the following,
which was partially shown in \cite{Zhang2025a} and recently further developed in \cite[Thms.~2~\&~3]{McRae2026preprint}:
Under the model \eqref{eq:ms_model},
with optimization rank $r \geq \rgt \coloneqq \rank(\Mst)$,
if $\scrA$ has $(r + \rgt, \alpha, \beta)$-RIP with
\begin{equation}
	\label{eq:cond_RIP}
	\frac{\beta}{\alpha} < 1 + 2 \sqrt{\frac{r}{\rgt}},
\end{equation}
then any second-order critical point%
\footnote{A second-order critical point is one at which the gradient is zero and the Hessian is positive semidefinite; see \Cref{sec:proofs_determ} for explicit expressions for our problem. This is a weaker assumption than local optimality, and there are various guarantees in the optimization literature (e.g., \cite{Cartis2012,Lee2019b}) that standard algorithms will find such points.}
$(U, V)$ of \eqref{eq:lasso_fact} satisfies, with $M \coloneqq U V\transpose$,
\begin{equation}
	\label{eq:err_RIP}
	\normF{M - \Mst} \leq C_{\alpha, \beta} \errbdcomps{},
\end{equation}
where
$C_{\alpha, \beta} > 0$ depends only on $\alpha$ and $\beta$,
$\scrA^* \colon \R^n \to \Mspace$ is the adjoint of $\scrA$,
$\opnorm{\cdot}$ denotes the matrix ($\ell_2$) operator norm,
and, for $x \in \R$, we denote $x_+ \coloneqq \max\{x, 0\}$.
As discussed in \cite{McRae2026preprint}, the error bound \eqref{eq:err_RIP} is optimal within a multiplicative constant
and thus reveals the effects of the measurement error $\xi$ and the hyperparameters $r$ and $\lambda$ on the recovery error.

Although convenient for theory, RIP is a strong assumption for many applications.
It has been shown to hold, for example, when $A_1, \dots, A_n$ are dense, unstructured random matrices (e.g., the entries are standard Gaussians \cite{Candes2011b}),
but such measurements are burdensome to generate, store, and use.
For many more practical measurements, RIP does not hold.

Our main example problem will be the (asymmetric) rank-1 model
\begin{equation}
	\label{eq:r1_model}
	A_i = a_i b_i\transpose,
\end{equation}
where $a_1, \dots, a_n \in \R^{d_1}$ and $b_1, \dots, b_n \in \R^{d_2}$ are known vectors.
These measurements are more practical than the dense matrices typically considered for showing RIP.
This model encompasses problems such as matrix completion and blind deconvolution (see \Cref{rem:mc} below for some discussion and references), though these will not be our main focus.

If the vectors $\{a_i, b_i\}$ are chosen randomly, independently, and, for example, Gaussian,
then convex programming methods similar to \eqref{eq:lasso} have been shown to recover $\Mst$ with statistically optimal sample complexity and error \cite{Cai2015,Zhong2015}.
However, existing guarantees for more computationally scalable approaches like \eqref{eq:lasso_fact} are weaker; see \Cref{rem:ms_complexity} below for some discussion and comparison to prior work.

To show a benign landscape of \eqref{eq:lasso_fact},
results assuming RIP such as \cite{Zhang2025a,McRae2026preprint} do not apply without severe statistical suboptimality.
As noted by \cite{Cai2015,Zhong2015}, rank-1 measurements do not satisfy RIP (with useful values of $\alpha, \beta$) without an unreasonably large number $n$ of measurements
(consider the application of $\scrA$ to any of the rank-1 matrices $A_i$).

Without RIP, what can we say about the landscape of \eqref{eq:lasso_fact}?
There are relatively few results with rank-1 measurements.
The most relevant work we are aware of is \cite{Ge2016,Ge2017} for matrix completion.
However, these results require a statistically suboptimal number of samples and add additional problem-structure--specific regularization (see \Cref{rem:mc} below for more discussion).
Another work is \cite{McRae2026a}, which, building on prior work for phase retrieval (e.g., \cite{Sun2018}),
studies the landscape of a similar-looking problem: that of recovering a \emph{symmetric, positive definite} matrix $\Mst$ with measurements of the form $A_i = a_i a_i\transpose$.
However, their analysis depends critically on the fact that the ground truth and the measurement matrices are positive semidefinite, and thus we cannot adapt their results to the asymmetric model \eqref{eq:r1_model}. 
Thus new landscape tools are needed.

\subsection{Main deterministic landscape guarantee}
We now present the main result of this paper,
which is a landscape guarantee for \eqref{eq:lasso_fact} based on new deterministic assumptions on $\scrA$.

For completeness, our result includes recovery of positive semidefinite (PSD) symmetric matrices.
We denote by $\symspace$ the set of symmetric matrices in $\Mspaced$,
and we denote by $\psdspace \subseteq \symspace$ the set of PSD such matrices.
Thus, in the model \eqref{eq:ms_model},
we further assume that $\Mst \in \psdspace$ and $A_1, \dots, A_n \in \symspace$.
The symmetric PSD version of \eqref{eq:lasso_fact} is
\begin{equation}
	\min_{U \in \Uspace}~\frac{1}{2}\norm{\scrA(U U\transpose) - \y}^2 + \lambda \normF{U}^2 \label{eq:lasso_sym_fact}.
\end{equation}
Note that, unlike \cite{McRae2026a}, we do not require the measurement matrices $A_i$ to be PSD.

To state our result, we need an additional key concept in low-rank matrix geometry.
The \emph{tangent space}
to $\Mst$ is the linear subspace
\[
	\Tst \coloneqq \{ L \Mst + \Mst R\transpose : L \in \R^{d_1 \times d_1}, R \in \R^{d_2 \times d_2} \} \subseteq \Mspace.
\]
In the symmetric case (where $d_1 = d_2 = d$), we furthermore need $L = R$ so that $\Tst \subseteq \symspace$.
If $\rgt = \rank(\Mst)$, all matrices in $\Tst$ have rank at most $2 \rgt$.

\newcommand{\socptext}{for every $\lambda \geq 0$, every second-order critical point $(U, V)$ of \eqref{eq:lasso_fact} with $M \coloneqq U V\transpose$ (resp.\ every second-order critical point $U$ of \eqref{eq:lasso_sym_fact} with $M \coloneqq U U\transpose$) satisfies}

With this, we can state our main result:
\begin{theorem}
	\label{thm:determ}
	In the asymmetric (resp.\ symmetric) case, under the model \eqref{eq:ms_model},
	let $\ropt \geq \rgt \coloneqq \rank(\Mst)$, and suppose $\scrA$ satisfies the following for some $\beta \geq \alpha > 0$:
	\begin{enumerate}
		\item (Restricted lower isometry on error) For all $M \in \Mspace$ (resp.\ $M \in \psdspace$) with rank at most $\ropt$,
		\[
			\norm{\scrA(M - \Mst)}^2 \geq \alpha \normF{M - \Mst}^2.
		\]
		\item (Tangent space upper isometry) For all $H \in \Tst$,
		\[
			\norm{\scrA(H)}^2 \leq \beta \normF{H}^2.
		\]
	\end{enumerate}
	If
	\begin{equation}
		\label{eq:ab_cond}
		\frac{\beta}{\alpha} < \frac{6}{\sqrt{5} + 2} \approx 1.42,
	\end{equation}
	then, \socptext{}
	\begin{equation}
		\label{eq:errbd}
		\normF{M - \Mst} \leq \frac{(36\lambda + 12 \opnorm{\scrA^*(\xi)})\sqrt{\rgt} + 10\sqrt{\ropt+\rgt}(\opnorm{\scrA^*(\xi)} - \lambda)_+}{6\alpha - (\sqrt{5}+2)\beta }.
	\end{equation}
\end{theorem}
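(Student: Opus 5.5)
We will follow the standard second-order landscape argument, as in \cite{Zhang2025a,McRae2026preprint}, but we arrange it so that upper isometry is only ever applied to matrices in $\Tst$. Lower isometry is applied only to the error $M - \Mst$.

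We first reduce the asymmetric case to a symmetric-like one. At a second-order critical point of \eqref{eq:lasso_fact}, the first-order condition with $\lambda$-regularization forces balancing, $U\transpose U = V\transpose V$. Setting $W = \begin{bmatrix} U \\ V\end{bmatrix}$, we can then write the gradient and Hessian conditions in terms of $W$ and the lifted operator. Let $G \coloneqq \scrA^*(\scrA(M) - \y) = \scrA^*(\scrA(M - \Mst)) - \scrA^*(\xi)$. The conditions become:
\begin{itemize}
	\item first order: $G V = -\lambda U$ and $G\transpose U = -\lambda V$;
	\item second order: for all directions $(\dot U, \dot V)$,
	\[
	\norm{\scrA(\dot U V\transpose + U \dot V\transpose)}^2 + 2\ip{G}{\dot U \dot V\transpose} + \lambda(\normF{\dot U}^2 + \normF{\dot V}^2) \geq 0.
	\]
\end{itemize}

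The key steps, in order, are as follows.

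(i) Test the first-order condition against the error. This gives $\ip{G}{M - \Mst} = -\lambda\nucnorm{M} + \ip{G}{\Mst}$ up to rearrangement. Combined with lower isometry on $M - \Mst$, we get
\[
\alpha \normF{M - \Mst}^2 \leq \norm{\scrA(M-\Mst)}^2 = \ip{G + \scrA^*(\xi)}{M - \Mst}.
\]
Here the only troublesome term is $\ip{G}{\Mst}$, or equivalently the component of $G$ acting on the ground-truth subspaces.

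(ii) Use second-order directions of the form $\dot U = -u w\transpose$ and $\dot V = v w\transpose$ (plus symmetric counterparts). Here $u, v$ are top singular vectors of $\Mst$ projected off the column spaces of $U$ and $V$, and $w$ ranges over an orthonormal basis of a null direction of $W$, or more generally averages over a random $w$ in $\R^{\ropt}$, in the style of \cite{Zhang2025a}. The curvature term $\scrA(\dot U V\transpose + U\dot V\transpose)$ then involves matrices $u (Vw)\transpose$ and $(Uw) v\transpose$. These are not in $\Tst$, so instead we must choose directions with $\dot U V\transpose + U \dot V\transpose \in \Tst$. Concretely, we take $\dot U = \Ust R$ and $\dot V = \Vst S$, with $\Mst = \Ust\Vst\transpose$ a balanced factorization. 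Then $\dot U V\transpose = \Ust(RV\transpose)$ lies in $\Tst$ because its columns are in the column space of $\Mst$. This is the crucial design choice, and tangent upper isometry gives $\norm{\scrA(\cdot)}^2 \leq \beta\normF{\cdot}^2$. The cross term $2\ip{G}{\Ust R S\transpose \Vst\transpose}$ then extracts exactly $\ip{G}{\Mst}$-type quantities with a negative sign when we choose $S = -R$ appropriately. Averaging $R$ over random rotations, or over partial isometries onto $\ker$-type directions of $W$ (available since $\ropt \geq \rgt$), bounds $-\ip{G}{\Mst}$ by $\beta$ times norms like $\normF{\Ust}\normF{V} + \dots$ and $\lambda\nucnorm{\Mst}$. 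After optimizing a scaling parameter, this produces the constant $\sqrt5+2$.

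(iii) Combine (i) and (ii). The noise terms are bounded by $\abs{\ip{\scrA^*(\xi)}{X}} \leq \opnorm{\scrA^*(\xi)}\nucnorm{X}$. For $X$ of rank $\leq \ropt+\rgt$ or supported on $\rgt$-dimensional subspaces, we use $\nucnorm{X}\leq\sqrt{\rank X}\normF{X}$, splitting $\opnorm{\scrA^*(\xi)} \leq \lambda + (\opnorm{\scrA^*(\xi)}-\lambda)_+$ so that the $\lambda\nucnorm{M}$ from (i) absorbs the part up to $\lambda$. This yields a quadratic inequality
\[
(6\alpha - (\sqrt5+2)\beta)\normF{M-\Mst}^2 \lesssim [\text{noise/}\lambda\text{ terms}]\,\normF{M-\Mst},
\]
which gives \eqref{eq:errbd} after dividing.

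We expect step (ii) to be the main obstacle. We must bound quantities like $\normF{\Ust R V\transpose}$ in terms of $\normF{M - \Mst}$ without upper isometry off $\Tst$, which requires careful use of the balancing identity and the first-order condition to relate $U, V$ to $\Ust, \Vst$. The symmetric PSD case follows the same argument with $V = U$ and $L = R$.
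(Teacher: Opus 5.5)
\textbf{There is a genuine gap: the test direction in step (ii) cannot produce a bound that scales with the error.}

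Start by fixing the sign in (i). By the first-order conditions and balancing, $\ip{G}{M} = \ip{GV}{U} = -\lambda\nucnorm{M}$, so
\[
\norm{\scrA(M-\Mst)}^2 = -\lambda\nucnorm{M} - \ip{G}{\Mst} + \ip{\scrA^*(\xi)}{M-\Mst}.
\]
You therefore need an \emph{upper} bound on $-\ip{G}{\Mst}$. The Hessian inequality gives a lower bound on $\ip{G}{\Udt\Vdt\transpose}$, so you must take $\Udt = \Ust R$ and $\Vdt = \Vst R$ with $RR\transpose = I_{\rgt}$. That is $S = R$, not $-R$. This yields
\[
\norm{\scrA(M-\Mst)}^2 \leq \tfrac12\norm{\scrA(\Ust R V\transpose + U R\transpose \Vst\transpose)}^2 + \lambda(\nucnorm{\Mst} - \nucnorm{M}) + \ip{\scrA^*(\xi)}{M - \Mst}.
\]

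The matrix inside $\scrA$ does lie in $\Tst$, but its size has nothing to do with the error. Take $\ropt = \rgt = 1$, where $R = \pm 1$ is forced, and $U = s\Ust$, $V = s\Vst$. Then this matrix equals $\pm 2s\Mst$, while $M - \Mst = (s^2-1)\Mst$. So the inequality you hope for in your last paragraph, bounding $\normF{\Ust R V\transpose + UR\transpose\Vst\transpose}$ by a multiple of $\normF{M-\Mst}$, is false, not merely hard. With $\xi = 0$ and $\lambda = 0$, your chain gives $\alpha\normF{M-\Mst}^2 \leq \frac{\beta}{2}\normF{\Ust RV\transpose + UR\transpose\Vst\transpose}^2$, whose right-hand side does not vanish as $M \to \Mst$. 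It therefore cannot certify even exact recovery. Averaging over rotations or ``ker-type'' directions of $W$ cannot help when $\ropt = \rgt = 1$, which the theorem allows.

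Your sign error is what made the plan look plausible. With $S = -R$ the matrix is $\Ust RV\transpose - UR\transpose\Vst\transpose$, which vanishes in this example, but that bounds the wrong quantity. Separately, balancing is forced by the first-order condition only when $\lambda > 0$. For $\lambda = 0$ you need a reduction such as \Cref{lem:balred}, which uses $G = 0$ when $\rank M < \ropt$ and a reparametrization of the Hessian directions when $\rank M = \ropt$.

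\textbf{The missing idea} is to test the Hessian in the direction pointing from $(U,V)$ toward the aligned ground truth: $\Udt = \Ust Q\transpose - U$ and $\Vdt = \Vst Q\transpose - V$. Then
\[
U\Vdt\transpose + \Udt V\transpose = T - 2(M-\Mst), \qquad T \coloneqq \Delta_U\Vst\transpose + \Ust\Delta_V\transpose \in \Tst.
\]
This matrix is \emph{not} in $\Tst$, but upper isometry is never applied to the non-tangent piece. When you expand $\frac12\norm{\scrA(T - 2(M-\Mst))}^2$, every cross term involving $\scrA(M-\Mst)$ is converted by the first-order condition into multiples of $\norm{\scrA(M-\Mst)}^2$. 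The relevant form of that condition is $\ip{G + \lambda E}{LV\transpose + UR\transpose} = 0$, with $E$ a nuclear-norm subgradient at $M$. Adding the $\ip{G}{\Udt\Vdt\transpose}$ term then gives $3\norm{\scrA(M-\Mst)}^2 \leq \frac12\norm{\scrA(T)}^2$ plus noise and $\lambda$ terms.

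You then need a separate algebraic inequality. With $Q$ the polar factor of $U\transpose\Ust + V\transpose\Vst$, one has $\normF{T}^2 \leq (\sqrt5 + 2)\normF{M-\Mst}^2$. The paper proves this in the symmetric case via a trace decomposition that exploits $\Ust\transpose UQ \succeq 0$, and lifts it to the asymmetric case by stacking $U$ over $V$. The factor $3$, hence the $6\alpha$, and the constant $\sqrt5+2$ come from these two steps, not from optimizing a scaling parameter.
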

We prove this in \Cref{sec:proofs_determ} below.
The result is also valid in the complex case with the appropriate trivial adjustments in notation; the exact proof applies with these adjustments.
In the complex case, we understand second order criticality in the usual (real-variable) sense in terms of the real and complex parts of the variables.

\begin{remark}
	We do not guarantee recovery of the \emph{global optimum} (except in the case $\lambda = 0$, $\xi = 0$, when we obtain exact recovery of $\Mst$, and this is clearly a global optimum of \eqref{eq:lasso_fact}).
	To show global optimality with our framework would require assumptions on the rank and tangent space of such a global optimum similar to what we have assumed for the ground truth;
	for most applications with random measurements, showing that these hold for a global optimum is more difficult than for the ground truth because the optimum itself depends on $\scrA$.
	See \cite[Sec.~3.3]{McRae2026preprint} for further related discussion.
\end{remark}

\begin{remark}
\Cref{thm:determ} has several important similarities to and differences from results that assume RIP:
\begin{itemize}
	\item The definition of lower isometry constant $\alpha > 0$ in \Cref{thm:determ} is similar to that of \eqref{eq:RIP} except that it only needs to apply to matrices of the form $H = M - \Mst$;
	this could make a difference in some applications,
	but it is not a critical distinction, as one can show slightly weaker variants of \cite[Thm.~3]{McRae2026preprint} that only require our definition (see, in particular, that work's Remark~3).
	\item The key novelty in our assumptions is that the upper isometry constant $\beta > 0$ is only defined over $H$ in the tangent space $\Tst$ to $\Mst$,
	whereas \eqref{eq:RIP} requires upper isometry over all $H$ of rank up to $\ropt + \rgt$
	(a nuance: again by \cite[Rmk.~3]{McRae2026preprint}, there is a variant of the RIP result only requiring upper isometry on arbitrary $H$ of rank 2; however this does not help for our applications with rank-1 measurements).
	\item Our requirement \eqref{eq:ab_cond} plays a similar role to \eqref{eq:cond_RIP} for RIP,
	but the condition \eqref{eq:cond_RIP} allows a wider range of $\alpha, \beta$ and,
	notably, becomes weaker for larger values of the optimization rank $r$
	(thus showing a potential benefit to rank \emph{overparametrization}, with the caveat that $\alpha,\beta$ could also change with $r$).
	It is not clear whether we could show a similar effect of overparametrization under our assumptions:
	our weaker definition of upper isometry requires a fundamentally different proof technique than that of \cite{Zhang2025a,McRae2026preprint}.
	Relatedly, the work \cite{McRae2026a} also shows benefits of overparametrization under conditions qualitatively similar to ours (see, in particular, their Theorem~2),
	but their analysis depends critically on their (positive semidefinite) problem structure.
	
	\item The error bound \eqref{eq:errbd} is comparable to the bound \eqref{eq:err_RIP} from \cite{McRae2026preprint} and thus, within constants, optimal.
\end{itemize}
\end{remark}
To see the practical benefits of our weaker isometry assumption, we next return to our example application.

\subsection{Application to random rank-1 measurements}
\label{sec:intro_ms}
We again consider the rank-1 measurements \eqref{eq:r1_model}.
We will assume that the vectors $\{a_i, b_i\}_i$ are random, isotropic and sub-Gaussian:
a zero-mean random vector $X \in \R^D$ is
\begin{enumerate}
	\item \emph{Isotropic} if $\E X X\transpose = I_D$, or, equivalently, $\E \abs{\ip{X}{h}}^2 = \norm{h}^2$ for all fixed $h \in \R^D$; and
	\item $K$--\emph{sub-Gaussian} for some $K > 0$ if, for all fixed $h \in \R^D$ with $\norm{h} = 1$, $\E e^{\abs{\ip{X}{h}}^2/K^2} \leq 2$.
\end{enumerate}
This is a slightly more general assumption than is typical in the literature on matrix recovery from random rank-1 measurements (see \Cref{rem:ms_complexity} below);
the comparable works that we are aware of assume that $a_i, b_i$ are Gaussian or, in the case of \cite{Cai2015}, have independent scalar sub-Gaussian entries.

With these measurements, we have the following landscape result:
\begin{theorem}
	\label{thm:ms}
	Consider the model \eqref{eq:ms_model} with rank-1 measurements of the form \eqref{eq:r1_model}.
	Suppose $a_1, \dots, a_n \in \R^{d_1}$ and $b_1, \dots, b_n \in \R^{d_2}$ are independent random vectors, where $a_1, \dots, a_n$ are copies of a zero-mean, isotropic, $K_a$--sub-Gaussian random vector $a \in \R^{d_1}$,
	and $b_1, \dots, b_n$ are copies of a zero-mean, isotropic, $K_b$--sub-Gaussian random vector $b \in \R^{d_2}$.

	There exist universal constants $C_1, C_2, C_3 > 0$ such that, for any optimization rank parameter $\ropt \geq \rgt$, if
	\[
		n \geq C_1 K_a^4 K_b^4 \ropt(d_1 + d_2) \log (d_1 + d_2),
	\]
	then, with probability at least $1 - \frac{C_2}{d_1 + d_2}$,
	for any $\lambda \geq 0$,
	every second-order critical point $(U, V)$ of \eqref{eq:lasso_fact} satisfies, with $M \coloneqq U V\transpose$,
	\[
		\normF{M - \Mst} \leq \frac{C_3}{n} \errbdcomps{}.
	\]
\end{theorem}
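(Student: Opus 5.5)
The plan is to deduce \Cref{thm:ms} from \Cref{thm:determ}, applied to the rescaled operator $\scrA/\sqrt{n}$ (equivalently, with $\alpha,\beta$ of order $n$). Rescaling $\scrA$ and $\y$ by $1/\sqrt{n}$ and $\lambda$ by $1/n$ maps second-order critical points to second-order critical points, so it suffices to show the following. With probability at least $1 - C_2/(d_1+d_2)$, the normalized operator satisfies the restricted lower isometry on errors $M - \Mst$ with $\rank M \leq \ropt$ with constant $\alpha \geq 1 - \delta$. It also satisfies the tangent-space upper isometry with $\beta \leq 1+\delta$, for a small absolute $\delta$, so that $(1+\delta)/(1-\delta) < 6/(\sqrt5+2)$. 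Then \eqref{eq:errbd} gives the claimed bound. The extra term $12\sqrt{\rgt}\,\opnorm{\scrA^*(\xi)}$ in \eqref{eq:errbd} is absorbed by noting $\sqrt{\rgt}\,\opnorm{\scrA^*(\xi)} \leq \sqrt{\rgt}\,\lambda + \sqrt{\rgt}\,(\opnorm{\scrA^*(\xi)}-\lambda)_+$. Isotropy and independence give $\E \ip{a b\transpose}{H}^2 = \normF{H}^2$, so both conditions are concentration statements about $\frac1n\sum_i (a_i\transpose H b_i)^2$.

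\textbf{Step 1 (lower isometry).} Every $H = M - \Mst$ has rank at most $\ropt+\rgt \leq 2\ropt$. A lower bound needs only a small-ball / Mendelson-type argument, not full concentration of heavy-tailed quantities. By Paley--Zygmund, $\Pr(\abs{a\transpose H b} \geq \normF{H}/2) \geq c/(K_a K_b)^{O(1)}$. This follows because $a\transpose H b$ has second moment $\normF{H}^2$ and fourth moment $\lesssim K_a^4K_b^4\normF{H}^4$, using Hanson--Wright-type decoupled chaos bounds. Mendelson's small-ball method then yields $\inf_H \frac1n\sum_i (a_i\transpose H b_i)^2 \geq c'\normF{H}^2$ over unit-norm rank-$2\ropt$ matrices. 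This requires controlling the Rademacher complexity $\E\opnorm{\frac1{\sqrt n}\sum_i \epsilon_i a_i b_i\transpose}\sqrt{2\ropt}$, which is $\lesssim K_aK_b\sqrt{\ropt(d_1+d_2)\log(d_1+d_2)}$ by matrix Bernstein with truncation. Here, however, $c'$ is only a small constant, not $1-\delta$. The ratio condition \eqref{eq:ab_cond} needs $\alpha$ close to $\beta$, so I would instead aim for a sharp two-sided bound on the restricted error set. The approach is a truncated-variable argument: $\frac1n\sum_i \min\{(a_i\transpose Hb_i)^2, \tau\} \geq (1-\delta)\normF{H}^2$. This holds uniformly by a covering of the rank-$2\ropt$ unit sphere (entropy $\lesssim \ropt(d_1+d_2)\log$), combined with Bernstein for the bounded truncated variables. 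The truncation bias is at most $\delta$ by the fourth-moment bound with $\tau \sim K_a^2K_b^2/\delta$. A lower bound survives truncation, and the net approximation can be handled with the usual peeling or with a lower-bound-friendly argument (the truncated quadratic is Lipschitz in $H$ after a suitable smoothing).

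\textbf{Step 2 (tangent-space upper isometry).} This is the step where RIP fails, and where $\Tst$ helps. Write $H = P_U X + Y P_V$ with $P_U, P_V$ projecting onto the $\rgt$-dimensional column and row spaces. The quadratic form $\frac1n\sum_i (a_i\transpose H b_i)^2$ restricted to $\Tst$ is a quadratic form on a space of dimension $\asymp \rgt(d_1+d_2)$. The task is to bound the operator norm of $\frac1n\sum_i \PTst(A_i)\otimes\PTst(A_i) - \mathrm{Id}$ on $\Tst$. Each $\PTst(a_ib_i\transpose) = P_Ua_ib_i\transpose + a_i b_i\transpose P_V - P_Ua_ib_i\transpose P_V$ has squared Frobenius norm $\lesssim \norm{P_Ua_i}^2\norm{b_i}^2 + \norm{a_i}^2\norm{P_Vb_i}^2 \approx \rgt(d_1+d_2)$ with high probability. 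Matrix Bernstein, after truncating on the high-probability event that these norms are at their typical sizes (plus an expectation correction for the truncation), then gives deviation $\delta$ once $n \gtrsim K_a^4K_b^4\rgt(d_1+d_2)\log(d_1+d_2)$. Since $\rgt \leq \ropt$, this fits the stated sample size, with failure probability $\lesssim (d_1+d_2)^{-1}$.

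\textbf{Main obstacle.} I expect the hard part to be obtaining a \emph{near-one} lower constant $\alpha$ in Step 1 uniformly over the non-convex rank-$2\ropt$ set with only $\ropt(d_1+d_2)\log$ samples. Heavy-tailed products $a\transpose H b$ make naive union bounds lose, and the truncation-plus-net argument must be arranged so that only lower deviations are needed. If a $1-\delta$ constant proves unreachable, I would fall back on adjusting the normalization, using a sharp small-ball estimate with constant close to the second moment for well-spread $H$.
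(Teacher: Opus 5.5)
\textbf{Verdict: there is a gap in Step 2.} As written, your tangent-space argument yields the sample complexity only up to an extra logarithmic factor when $\ropt$ is small. The reduction and Step 1 are fine.

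\textbf{Where Step 2 fails.} Matrix Bernstein does not give the claimed sample size when $\ropt$ is small.
\begin{itemize}
\item The truncation event has to hold for all $n$ samples simultaneously. Since $\normF{\PTst(a_ib_i\transpose)}^2 \geq \norm{P_U a_i}^2\norm{b_i}^2$, take $\rgt = 1$ and Gaussian $a$: then $\norm{P_U a_i}^2$ is $O(1)$ for a typical $i$ but of order $\log n$ for the worst $i$.
\item So the admissible truncation level is $L \asymp K_a^2K_b^2(\rgt + \log n)(d_1+d_2)$, not $\rgt(d_1+d_2)$.
\item Truncating individual samples at the typical level and adding an ``expectation correction'' cannot produce an \emph{upper} bound. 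The discarded heavy part of the empirical sum still fluctuates and must be controlled.
\item Matrix Bernstein (or Rudelson's inequality) then contains the term $L\log D/n$ with $D = \dim\Tst$. This forces $n \gtrsim (\rgt + \log n)(d_1+d_2)\log(d_1+d_2)$.
\item For $\ropt = \rgt = 1$ that is $(d_1+d_2)\log^2(d_1+d_2)$, a full logarithmic factor above the claimed $C_1K_a^4K_b^4\ropt(d_1+d_2)\log(d_1+d_2)$. Your remark ``since $\rgt \le \ropt$, this fits the stated sample size'' is therefore false in the regime $\ropt \lesssim \log(d_1+d_2)$.
\end{itemize}

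\textbf{The missing ingredient} is a log-free covariance estimation bound for heavy-tailed vectors. The paper applies Tikhomirov's result to $X = \PTst(ab\transpose)$ in dimension $D \le \rgt(d_1+d_2)$, which gives
\[
\opnorm*{\frac1n\sum_i X_iX_i\transpose - I_D} \le C_q\parens*{\frac{\max_i\norm{X_i}^2}{n} + B^2\sqrt{\frac{D}{n}}}.
\]
It uses $q = 8$ and $B \asymp K_aK_b$, which come from $(\E\abs{\ip{ab\transpose}{H}}^8)^{1/8} \lesssim K_aK_b\normF{H}$. Because the $\max_i\norm{X_i}^2/n$ term carries no $\log D$, the $(\rgt+\log n)(d_1+d_2)$ contribution is absorbed by $\ropt(d_1+d_2)\log(d_1+d_2)$.

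\textbf{You misidentified the hard step.} Step 1 is the easy side, and your truncation-plus-net plan works. The paper instead uses a one-sided lower-tail bound for sums of nonnegative variables, which needs only $\E\abs{\ip{A}{H}}^4 \le B^4$. It handles the net approximation with a polynomially fine net ($\epsilon$ polynomial in $1/(d_1+d_2)$) together with the crude bound $\opnorm{\scrA} \le \normF{\scrA}$, using $\E\normF{\scrA}^2 = nd_1d_2$ and Markov. No peeling or smoothing is required. One small correction: with only a fourth-moment bound, your truncation level should be of order $K_a^4K_b^4/\delta$, not $K_a^2K_b^2/\delta$, which is harmless.
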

This follows from a result for more general random measurements, \Cref{thm:ms_gen}, in \Cref{sec:ms} below.
It is likely that some version of this result is also true in the complex case (see the comment after \Cref{thm:determ}), but, as many of the probability tools we use are only stated in the real case in the literature,
we do not attempt to verify this in the present work.
The $O((d_1 + d_2)^{-1})$ failure probability bound could be improved with different (and perhaps less general) proof techniques.

\begin{remark}
	To our knowledge, this is the first benign landscape guarantee for random sub-Gaussian rank-1 measurements.
	The only existing landscape analysis we are aware of is \cite{Diaz2019},
	which studies (first-order) critical points of a nonsmooth least absolute deviations loss in the case $r = \rgt = 1$ (a version of blind deconvolution---see \Cref{rem:mc} below).
\end{remark}

\begin{remark}
	\label{rem:ms_complexity}
	When $r \approx \rgt$, the requirement that $n \gtrsim \rgt(d_1 + d_2) \log (d_1 + d_2)$ is, within constants and a logarithmic factor, optimal, as the set of rank-$\rgt$ matrices in $\Mspace$ has $\approx \rgt(d_1 + d_2)$ degrees of freedom.
	Recovery with the optimal sample complexity $n \approx \rgt(d_1 + d_2)$ was shown by \cite{Cai2015,Zhong2015} for a convex programming approach similar to \eqref{eq:lasso}.
	For more computationally scalable methods, previous recovery guarantees are less satisfying.
	A survey of several classes of algorithms and their associated guarantees is provided by \cite[Sec.~1.4]{Bahmani2021};
	that work proposes and analyzes another (unconventional) algorithm.
	One can also directly analyze more typical algorithms such as spectral initialization followed by gradient descent on \eqref{eq:lasso_fact} \cite{Zhong2015,Li2021a}
	(see also the similar results for matrix completion and blind deconvolution---some are referenced below in \Cref{rem:mc}).
	These results, however, typically require exact knowledge of the true rank $\rgt$ and a number of samples $n$ that scales suboptimally with $\rgt$ and the ground truth condition number.
	The work \cite{Foucart2019} does give recovery guarantees with the optimal sample complexity $n \gtrsim \rgt(d_1 + d_2)$ for a low-rank algorithm;
	however, the algorithm (a modified version of iterative singular value hard thresholding with a large, constant degree of rank overparametrization) is quite unconventional and requires iteratively computing truncated singular value decompositions of large matrices.
	
\end{remark}

\begin{remark}
	\label{rem:sym}
	One might wonder whether \Cref{thm:ms} could be extended to the symmetric case with $a_i = b_i$ as in, for example, \cite{Chen2015d,Kueng2017}.
	An extension of \Cref{thm:ms} to this case would be an interesting complement to results in \cite{McRae2026a}.
	However, our analysis in the present work depends critically on the fact that the sensing operator is, in expectation, an isometry,
	which is not the case with measurements of the form $A_i = a_i a_i\transpose$.
\end{remark}

\begin{remark}
	\label{rem:mc}
	Another seemingly promising application for our theory is matrix completion,
	where $A_i = e_{k_i} e_{\ell_i}\transpose$ for randomly distributed matrix indices $\{(k_i, \ell_i)\}_i$, where $e_k$ denotes the $k$th standard Euclidean basis vector in either $\R^{d_1}$ or $\R^{d_2}$.
	For this problem, tangent space isometry can be shown under the assumption that the row and column spaces of $\Mst$ are \emph{incoherent} with respect to the standard Euclidean basis (see, e.g., \cite[Ch.~4]{Wright2022} for an overview and many historical references).
	The work \cite{Ge2017} showed (under some strong statistical assumptions) benign landscape of a variant of \eqref{eq:lasso_fact} with an additional regularizer to promote incoherence.
	Unfortunately, without a guarantee of incoherence for $M$, our \emph{lower} isometry assumption in \Cref{thm:determ} does not hold for this measurement model, as demonstrated by a class of counterexamples given by \cite{Krahmer2021}.
	Treating this problem in our framework would likely require some guarantee of incoherence of matrices $M$ arising from second-order critical points,
	either (a) via an explicit regularizer as in \cite{Ge2017} or (b) by showing some kind of ``implicit regularization'' of second-order critical points
	(as is shown, again under strong statistical assumptions, for iterates of gradient descent in works such as \cite{Ma2020}).
	
	Similar considerations hold for the problem of blind deconvolution;
	see \cite{Krahmer2021,Ma2020,Chen2023} for more background and details. 
\end{remark}

\section{Deterministic proof}
\label{sec:proofs_determ}
This section is dedicated to proving \Cref{thm:determ}.
Although we mostly use the notation from the asymmetric case,
the proof is also valid for the symmetric case (setting $d_1 = d_2 = d$, $U = V$, etc.), with many steps becoming redundant.
As discussed after \Cref{thm:determ}, the result and its proof are also valid in the complex case with superficial modifications (replacing transpose by conjugate transpose, symmetric by Hermitian, inner products by their real part, etc.).

By standard calculations (see, e.g., \cite[Sec.~4]{McRae2026preprint}),
$(U, V)$ is a second-order critical point of \eqref{eq:lasso_fact} if and only if
\begin{equation}
	\begin{aligned}
		\scrA^*(\scrA(M) - \y) V + \lambda U &= 0, \\
		\scrA^*(\scrA(M) - \y)\transpose U + \lambda V &= 0
	\end{aligned}
	\label{eq:gradzero}
\end{equation}
and, for all $\Udt \in \Lspace$, $\Vdt \in \Rspace$,
\begin{equation}
	0 \leq \ip{\scrA^*(\scrA(M) - \y)}{ \Udt \Vdt \transpose }
	+ \frac{1}{2} \norm{\scrA( U \Vdt\transpose + \Udt V\transpose )}^2
	+ \frac{\lambda}{2} ( \normF{\Udt}^2 + \normF{\Vdt}^2).
	\label{eq:hessineq}
\end{equation}

Some additional terminology will be used several times in this section.
If $X$ is a rank-$k$ matrix with singular value decomposition $\Ubr \Sigma \Vbr\transpose$, where $\Ubr, \Vbr$ each have $k$ orthonormal columns,
and $\Sigma$ is $k \times k$ and diagonal with strictly positive diagonal entries,
then
\begin{enumerate}
	\item the \emph{polar factor} of $X$ is the matrix $\Ubr \Vbr\transpose$, and
	\item the Moore-Penrose pseudoinverse of $X$ is the matrix $X^\dagger = \Vbr \Sigma^{-1} \Ubr\transpose$.
\end{enumerate}

\subsection{Reduction to the balanced case}
In the asymmetric case,
a major challenge in the analysis of \eqref{eq:lasso_fact} is the possible imbalance of $(U, V)$.
We say that a pair $(U, V)$ is \emph{balanced} if $U\transpose U = V\transpose V$.
The following lemma says that, for the purpose of landscape analysis, we can assume without loss of generality that second-order critical points are balanced:
\begin{lemma}
	\label{lem:balred}
	Suppose $(U, V)$ is a second-order critical point of \eqref{eq:lasso_fact}, and set $M \coloneqq U V\transpose$.
	Then any balanced factorization $(\Ubal, \Vbal)$ of $M$ with $\Ubal \in \Lspace$, $\Vbal \in \Rspace$, $\Ubal \Vbal\transpose = M$, and $\Ubal\transpose \Ubal = \Vbal\transpose \Vbal$
	is also a second-order critical point of \eqref{eq:lasso_fact}.
\end{lemma}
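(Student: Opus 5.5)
The plan is to use the first-order conditions \eqref{eq:gradzero} to pin down what the Euclidean gradient $G \coloneqq \scrA^*(\scrA(M) - \y)$ does on the column and row spaces of $M$. Since $G$ depends on the factors only through $M$, it is the same for $(U,V)$ and for $(\Ubal,\Vbal)$. Then I will transfer \eqref{eq:gradzero} and \eqref{eq:hessineq} from $(U,V)$ to $(\Ubal,\Vbal)$. I will treat $\lambda > 0$ and $\lambda = 0$ separately.

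\emph{First-order conditions.} From \eqref{eq:gradzero}, $G M\transpose = G V U\transpose = -\lambda U U\transpose$ and $G\transpose M = -\lambda V V\transpose$, for every $\lambda \geq 0$. Two facts about balanced pairs will be used repeatedly. First, balancedness gives $(\Ubal \Ubal\transpose)^2 = \Ubal \Vbal\transpose \Vbal \Ubal\transpose = M M\transpose$, so $\Ubal \Ubal\transpose = (M M\transpose)^{1/2}$, and likewise $\Vbal\Vbal\transpose = (M\transpose M)^{1/2}$. Second, $\Vbal = M\transpose (\Ubal^\dagger)\transpose$, because $\Ubal^\dagger \Ubal$ is the projector onto the row space of $\Ubal$, which contains the column space of $\Vbal\transpose$.

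\emph{Case $\lambda > 0$.} Computing $U\transpose G V$ in two ways gives $-\lambda U\transpose U = -\lambda V\transpose V$, so $(U,V)$ is itself balanced. Then $U U\transpose = (MM\transpose)^{1/2}$, and therefore
\[
G \Vbal = G M\transpose (\Ubal^\dagger)\transpose = -\lambda \Ubal \Ubal\transpose (\Ubal\transpose)^\dagger = -\lambda \Ubal.
\]
The second equation of \eqref{eq:gradzero} follows symmetrically.

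For the Hessian, I will show that two balanced factorizations of the same $M$ differ by a common orthogonal matrix: $\Ubal = U R$ and $\Vbal = V R$ with $R \in \R^{\ropt \times \ropt}$ orthogonal. The existence of such an $R$ comes from equal Gram matrices, and the matching of the two factors comes from the pseudoinverse expressions above. Given this, \eqref{eq:hessineq} at $(\Ubal,\Vbal)$ in direction $(\Udt,\Vdt)$ is exactly \eqref{eq:hessineq} at $(U,V)$ in direction $(\Udt R\transpose, \Vdt R\transpose)$.

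\emph{Case $\lambda = 0$, rank-deficient factors.} Here $G M\transpose = 0$ and $G\transpose M = 0$, so the formula above gives $G\Vbal = 0$ and $G\transpose \Ubal = 0$, which is first-order criticality. For the second order, suppose first that $U$ (or $V$) has a nonzero null vector $z$. Take directions $\Udt = t x z\transpose$ and $\Vdt = s y z\transpose$. Then \eqref{eq:hessineq} becomes
\[
ts\ip{G}{xy\transpose} + \tfrac{t^2}{2}\norm{\scrA(x (Vz)\transpose)}^2 \geq 0,
\]
and letting $s \to \pm\infty$ with $t$ fixed forces $\ip{G}{xy\transpose} = 0$ for all $x,y$. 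Hence $G = 0$, and the Hessian inequality at $(\Ubal,\Vbal)$ is trivially true.

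\emph{Case $\lambda = 0$, full-rank factors.} This is the main obstacle: $U$ and $V$ both have full column rank $\ropt$, but $\Ubal$ may not, since $\rank M$ can be smaller than $\ropt$. In this case $GV = 0$ and $G\transpose U = 0$ say that $G$ vanishes on $\range V$ and $G\transpose$ vanishes on $\range U$. Consequently $\ip{G}{\Udt\Vdt\transpose}$ depends only on the components of $\Udt$ and $\Vdt$ orthogonal to $\range U$ and $\range V$, respectively.

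I will construct a linear correspondence of directions. Since $\range \Ubal = \range M \subseteq \range U$, we can write $\Ubal = U P$ and $\Vbal = V Q$ for suitable $P, Q \in \R^{\ropt\times\ropt}$. Set $\Vdt' = \Vdt P\transpose$ and $\Udt' = \Udt Q\transpose$. Then
\[
U\Vdt'\transpose + \Udt' V\transpose = \Ubal\Vdt\transpose + \Udt\Vbal\transpose,
\]
so the quadratic terms of \eqref{eq:hessineq} agree. The linear term becomes $\ip{G}{\Udt Q\transpose P \Vdt\transpose}$ instead of $\ip{G}{\Udt\Vdt\transpose}$.

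The delicate step is to repair this mismatch. I will try to exploit the freedom in the $\lambda = 0$ problem: the loss is invariant under $(U,V) \mapsto (UA, VA^{-\top})$ for invertible $A$, so second-order criticality is preserved by that map. I will choose $A$ so that $Q\transpose P$ acts as the identity on the relevant subspaces. If that fails, I will add to $(\Udt', \Vdt')$ components inside $\range U$ and $\range V$. These are invisible to the linear term, and I will use them, together with the scaling trick from the rank-deficient case, to reproduce the linear term exactly.
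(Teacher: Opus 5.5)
The $\lambda>0$ case is correct. So is the $\lambda=0$ case in which $U$ or $V$ has a nonzero null vector; your scaling argument with $s\to\pm\infty$ is a nice self-contained substitute for the result of \cite{Ha2020} that the paper cites to get $G=0$ when $\rank(M)<\ropt$.

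The gap is in the final case, and it has two parts. First, its premise is false. If $U$ and $V$ both have full column rank $\ropt$, then $M=UV\transpose$ has rank $\ropt$, because $U$ is injective and $V\transpose$ maps onto $\R^{\ropt}$. Since $\ropt=\rank(\Ubal\Vbal\transpose)\le\min\{\rank\Ubal,\rank\Vbal\}$, the balanced factors have full rank too, so the obstacle you describe does not exist. Second, and more importantly, you never establish that the linear term transfers: the proposal stops at ``I will try \ldots; if that fails, I will \ldots''. Your second fallback is not clearly viable either. Adding components in $\range U$ or $\range V$ to the directions leaves $\ip{G}{\Udt\Vdt\transpose}$ unchanged, but it changes $\norm{\scrA(U\Vdt\transpose+\Udt V\transpose)}^2$. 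As written, the Hessian inequality at $(\Ubal,\Vbal)$ is unproven exactly when $\rank(M)=\ropt$.

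The missing step is one line once full rank is used. Your $P=U^\dagger\Ubal$ and $Q=V^\dagger\Vbal$ are forced, and $UPQ\transpose V\transpose=\Ubal\Vbal\transpose=M=UV\transpose$. Multiply on the left by $U^\dagger$ and on the right by $(V^\dagger)\transpose$, and use $U^\dagger U=V^\dagger V=I_{\ropt}$; this gives $PQ\transpose=I_{\ropt}$. Since these matrices are square, also $Q\transpose P=I_{\ropt}$. Hence $\Udt Q\transpose P\Vdt\transpose=\Udt\Vdt\transpose$, and your correspondence of directions transfers both the linear and the quadratic term verbatim.

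This is exactly the paper's argument: its directions $\Ubaldt(V^\dagger\Vbal)\transpose$ and $\Vbaldt(U^\dagger\Ubal)\transpose$ at $(U,V)$ are your map. Equivalently, $Q=P^{-\top}$, so $(\Ubal,\Vbal)=(UP,VP^{-\top})$ is the image of $(U,V)$ under the invariance $(U,V)\mapsto(UA,VA^{-\top})$ with $A=P$. Your first repair idea therefore works at once, with no adjustment needed on ``relevant subspaces''.
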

This result also holds if we replace the quadratic function $M \mapsto \frac{1}{2} \norm{\scrA(M) - \y}^2$ by an arbitrary twice-differentiable convex function (as in, e.g., \cite{McRae2026preprint}).
\begin{proof}
	If $\lambda > 0$, the pair $(U, V)$ must, in fact, be balanced.
	Indeed, by \eqref{eq:gradzero}, we have
	\[
		\lambda U\transpose U = - U\transpose \scrA^*(\scrA(M) - \y) V = \lambda V\transpose V.
	\]
	To our knowledge, this fact was first noted by \cite{Li2019}.
	A balanced decomposition $M = U V\transpose$ is unique up to trivial right-multiplication of $U$ and $V$ by an orthogonal matrix,
	so this completes the proof in the case $\lambda > 0$.
	Thus it remains to consider the case $\lambda = 0$.
	
	As $\range(\Ubal) \subseteq \range(U)$ and $\range(\Vbal) \subseteq \range(V)$,
	we have
	\begin{equation}
		U U^\dagger \Ubal = \Ubal \quad \text{and} \quad V V^\dagger \Vbal = \Vbal,
		\label{eq:dag_id}
	\end{equation}
	and, furthermore, by \eqref{eq:gradzero} (with $\lambda = 0$), we have the balanced counterpart to \eqref{eq:gradzero}, which is
	\begin{align*}
		\scrA^*(\scrA(M) - \y) \Vbal &= 0, \quad \text{and} \\
		\scrA^*(\scrA(M) - \y)\transpose \Ubal &= 0.
	\end{align*}	
	It remains to show the Hessian inequality (with $\lambda = 0$), that is,
	\begin{equation}
		\label{eq:bal_hess}
		0 \leq \ip{\scrA^*(\scrA(M) - \y)}{ \Ubaldt \Vbaldt \transpose }
		+ \frac{1}{2} \norm{\scrA( \Ubal \Vbaldt\transpose + \Ubaldt \Vbal\transpose )}^2
	\end{equation}
	for all $\Ubaldt \in \Lspace$, $\Vbaldt \in \Rspace$.
	We consider two cases based on the rank of $M = U V\transpose$.
	If $(U, V)$ is a second-order critical point, and $\rank(M) < \ropt$, then, by \cite[Thm.~2.3]{Ha2020}, we must have the stronger condition $\scrA^*(\scrA(M) - \y) = 0$.
	
	In the remaining case where $\rank(M) = \ropt$,
	then $U$, $V$, $\Ubal$, and $\Vbal$ must all have full rank $\ropt$,
	so
	\[
		U V\transpose = M = \Ubal \Vbal\transpose \quad \Longrightarrow \quad U^\dagger \Ubal (V^\dagger \Vbal)\transpose = U^\dagger U V\transpose (V\transpose)^\dagger = I_{\ropt}.
	\]
	This furthermore implies that
	\begin{equation}
		(V^\dagger \Vbal)\transpose U^\dagger \Ubal = I_{\ropt}.
		\label{eq:id_id}
	\end{equation}
	Then, for any $\Ubaldt \in \Lspace$, $\Vbaldt \in \Rspace$,
	we choose, in \eqref{eq:hessineq} (with $\lambda = 0$),
	$\Udt = \Ubaldt (V^\dagger \Vbal)\transpose$ and $\Vdt = \Vbaldt (U^\dagger \Ubal)\transpose$
	to obtain
	\begin{align*}
		0 &\leq \ip{\scrA^*(\scrA(M) - \y)}{ \Udt \Vdt \transpose } + \frac{1}{2} \norm{\scrA( U \Vdt\transpose + \Udt V\transpose )}^2 \\
		&= \ip{\scrA^*(\scrA(M) - \y)}{ \Ubaldt (V^\dagger \Vbal)\transpose  U^\dagger \Ubal \Vbaldt\transpose } \\
		&\qquad+ \frac{1}{2} \norm{\scrA( U U^\dagger \Ubal \Vbaldt\transpose + \Ubaldt (V V^\dagger \Vbal)\transpose )}^2 \\
		&= \ip{\scrA^*(\scrA(M) - \y)}{ \Ubaldt \Vbaldt\transpose } + \frac{1}{2} \norm{\scrA( \Ubal \Vbaldt\transpose + \Ubaldt \Vbal\transpose )}^2.
	\end{align*}
	The last equality follows from \eqref{eq:dag_id} and \eqref{eq:id_id}.
	This is precisely the Hessian inequality \eqref{eq:bal_hess}.
\end{proof}

\subsection{Key inequality from second-order criticality}
The following lemma is the core of our analysis;
this is what allows us to only require upper isometry of $\scrA$ on the subspace $\Tst$.
\begin{lemma}
	\label{lem:basic}
	Under the model \eqref{eq:ms_model},
	let $\Mst = \Ust \Vst\transpose$ for $\Ust \in \Lstspace$, $\Vst \in \Rstspace$ that are balanced, that is, $\Ust\transpose \Ust = \Vst\transpose \Vst$.
	Suppose $(U, V)$ is a balanced second-order critical point of \eqref{eq:lasso_fact},
	and set $M \coloneqq U V\transpose$.
	For any $Q \in \R^{\ropt \times \rgt}$ such that $Q\transpose Q = I_{\rgt}$,
	setting
	\[
		\Delta_U \coloneqq U Q - \Ust \quad \text{and} \quad \Delta_V \coloneqq V Q - \Vst,
	\]
	we have
	\begin{align*}
		3 \norm{\scrA(M - \Mst)}^2
		&\leq \frac{1}{2} \norm{\scrA(\Delta_U \Vst\transpose + \Ust \Delta_V\transpose)}^2 \\
		&\qquad + 5 \ip{\scrA^*(\xi)}{M - \Mst} - 2 \ip{\scrA^*(\xi)}{\Delta_U \Vst\transpose + \Ust \Delta_V\transpose} \\
		&\qquad + 5\lambda(\nucnorm{\Mst} - \nucnorm{M}) + 2 \lambda \nucnorm{\Delta_U \Vst\transpose + \Ust \Delta_V\transpose}.
	\end{align*}
	The same holds in the symmetric case where $d_1 = d_2 = d$, $\Ust = \Vst$, and $U = V$ is a second-order critical point of \eqref{eq:lasso_sym_fact}.
\end{lemma}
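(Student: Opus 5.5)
The plan is to combine the first-order conditions \eqref{eq:gradzero} with one well-chosen instance of the Hessian inequality \eqref{eq:hessineq}, in the spirit of the ``direction toward the ground truth'' arguments of Ge et al. The key requirement is that the quadratic term which survives is (up to a remainder we can control) $\frac12\norm{\scrA(\Delta_U \Vst\transpose + \Ust \Delta_V\transpose)}^2$, which lies in $\Tst$. Every other quadratic quantity should appear as $\norm{\scrA(M - \Mst)}^2$ with a favorable sign. Throughout, write $G \coloneqq \scrA^*(\scrA(M) - \y) = \scrA^*\scrA(M - \Mst) - \scrA^*(\xi)$ and $T \coloneqq \Delta_U \Vst\transpose + \Ust \Delta_V\transpose \in \Tst$.

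\textbf{Step 1 (algebraic identities).} Since $UQ = \Ust + \Delta_U$ and $VQ = \Vst + \Delta_V$, we have
\[
UQQ\transpose V\transpose - \Mst = T + \Delta_U \Delta_V\transpose,
\]
and, for any direction of the form $\Udt = \Delta_U Q\transpose$, $\Vdt = \pm \Delta_V Q\transpose$, the Hessian argument equals
\[
U\Vdt\transpose + \Udt V\transpose = \pm UQ\Delta_V\transpose + \Delta_U (VQ)\transpose = T + 2\Delta_U\Delta_V\transpose \quad (\text{for the ``+'' sign}).
\]
Hence $\Delta_U\Delta_V\transpose$ can be eliminated by taking a linear combination of $UQQ\transpose V\transpose - \Mst$ and $T$. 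The leftover piece $M - UQQ\transpose V\transpose = U(I - QQ\transpose)V\transpose$ also has to be handled. For this I would additionally test the Hessian along directions built from $U(I-QQ\transpose)$ and $V(I-QQ\transpose)$, or avoid those directions and instead absorb the leftover through the first-order conditions, which give $G V = -\lambda U$ and $G\transpose U = -\lambda V$.

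\textbf{Step 2 (inner products via first order).} The terms of the form $\ip{G}{\Udt\Vdt\transpose}$ and $\ip{G}{UX\transpose}$, $\ip{G}{YV\transpose}$ are rewritten exactly using \eqref{eq:gradzero}. By balancedness, $\ip{G}{M} = -\lambda\normF{U}^2 = -\lambda\nucnorm{M}$. Terms like $\ip{G}{\Ust\Delta_V\transpose}$ are split using $G = \scrA^*\scrA(M-\Mst) - \scrA^*(\xi)$. This produces:
\begin{itemize}
\item the cross terms $\ip{\scrA(M-\Mst)}{\scrA(\cdot)}$;
\item the $\xi$ terms $\ip{\scrA^*(\xi)}{M - \Mst}$ and $\ip{\scrA^*(\xi)}{T}$;
\item $\lambda$ terms, which are bounded by $\lambda(\nucnorm{\Mst} - \nucnorm{M})$ and $\lambda\nucnorm{T}$ using duality, $\ip{G}{\Ust\Vst\transpose}$-type expressions together with $\nucnorm{\Mst} \le (\normF{\Ust}^2+\normF{\Vst}^2)/2$, and $\nucnorm{UV\transpose}$-type bounds.
\end{itemize}

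\textbf{Step 3 (assemble).} Write $M - \Mst = \tfrac12(T + 2\Delta_U\Delta_V\transpose) + (\text{a multiple of } T) + U(I-QQ\transpose)V\transpose$. Expand $\norm{\scrA(T + 2\Delta_U\Delta_V\transpose)}^2$ in terms of $\norm{\scrA(M-\Mst)}^2$, $\norm{\scrA(T)}^2$ and cross terms. Then choose the coefficients of the Hessian inequality and of the first-order identities so that:
\begin{itemize}
\item the cross terms $\ip{\scrA(M-\Mst)}{\scrA(T)}$ cancel or are absorbed by $ab \le \frac{a^2}{2} + \frac{b^2}{2}$;
\item what remains is $3\norm{\scrA(M-\Mst)}^2 \le \frac12\norm{\scrA(T)}^2 + \dots$ with the stated coefficients $5$ and $2$ on the $\xi$ and $\lambda$ terms.
\end{itemize}
The symmetric case follows by the same computation with $U=V$ and $\Ust=\Vst$.

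The main obstacle is Step 3 together with the $U(I-QQ\transpose)V\transpose$ leftover. One must find the exact combination of test directions whose quadratic terms collapse to $\norm{\scrA(M-\Mst)}^2$ and $\norm{\scrA(T)}^2$ only; no norm of a non-tangent, uncontrolled matrix may survive with a positive sign. I would first try a single direction $(\Udt,\Vdt) = (\Delta_U Q\transpose, -\Delta_V Q\transpose)$ plus first-order identities, and add the $(I-QQ\transpose)$ directions only if needed.
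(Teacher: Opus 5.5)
\textbf{The gap is the choice of Hessian test direction.} Your ingredients are the right ones: the first-order identities, $\ip{G}{M} = -\lambda\nucnorm{M}$ from balancedness, and the split $G = \scrA^*\scrA(M-\Mst) - \scrA^*(\xi)$. But the whole content of the lemma is the choice of the Hessian test direction, and your proposal leaves exactly that open.

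Your ``$+$'' direction $(\Delta_U Q^\top, \Delta_V Q^\top)$ gives, as you computed, $U\Vdt^\top + \Udt V^\top = T + 2\Delta_U\Delta_V^\top = 2(M - \Mst) - T - 2P$, where $P \coloneqq U(I_{\ropt} - QQ^\top)V^\top$. When $\ropt = \rgt$, $Q$ is orthogonal, $P = 0$, and this direction does yield the lemma. For $\ropt > \rgt$, however, the quadratic term $\tfrac12\norm{\scrA(2(M-\Mst) - T - 2P)}^2$ contains $2\norm{\scrA(P)}^2$ and $2\ip{\scrA(T)}{\scrA(P)}$ with $P \notin \Tst$. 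Neither of your remedies removes them:
\begin{itemize}
\item The first-order conditions only determine $\ip{G}{X}$, i.e.\ $\ip{\scrA(M-\Mst)}{\scrA(X)}$ up to $\xi$-terms, for $X$ of the form $LV^\top + UR^\top$. They say nothing about $\ip{\scrA(T)}{\scrA(P)}$ or $\norm{\scrA(P)}^2$.
\item A separate Hessian test along $(U(I-QQ^\top), V(I-QQ^\top))$ gives $0 \le \ip{G}{P} + 2\norm{\scrA(P)}^2 + \cdots$, which is a \emph{lower} bound on $\norm{\scrA(P)}^2$. More generally, each extra second-order inequality only adds another nonnegative $\tfrac12\norm{\scrA(\cdot)}^2$ to the right-hand side, so the cancellation has to happen inside a single quadratic form.
\end{itemize}
Your ``first try'' $(\Delta_U Q^\top, -\Delta_V Q^\top)$ is worse. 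Its linear term $-\ip{G}{\Delta_U\Delta_V^\top}$ contributes $+\norm{\scrA(M-\Mst)}^2$, so the resulting inequality is vacuous. It also produces the wrong tangent element $\Delta_U\Vst^\top - \Ust\Delta_V^\top$, and it is not admissible in the symmetric case.

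\textbf{The missing idea is to put the $(I-QQ^\top)$ components into the same direction.} Take $\Udt = \Ust Q^\top - U$ and $\Vdt = \Vst Q^\top - V$. This equals $-\bigl[(\Delta_U Q^\top, \Delta_V Q^\top) + (U(I-QQ^\top), V(I-QQ^\top))\bigr]$; the overall sign is irrelevant for a quadratic form. Then $U\Vdt^\top + \Udt V^\top = T - 2(M - \Mst)$ exactly, so neither $\Delta_U\Delta_V^\top$ nor $P$ ever appears.

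When you expand $\tfrac12\norm{\scrA(T - 2(M-\Mst))}^2$, the cross term must be evaluated exactly. Absorbing it by $ab \le \tfrac{a^2}{2} + \tfrac{b^2}{2}$ would spoil the constants $3$ and $\tfrac12$. Since $U\Vdt^\top + \Udt V^\top$ has the form $LV^\top + UR^\top$, first-order optimality gives $\ip{G + \lambda E}{T} = 2\ip{G + \lambda E}{M - \Mst}$. Here $E$ is a nuclear-norm subgradient at $M$ with $EV = U$ and $E^\top U = V$; the polar factor works, by balancedness.

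Counting coefficients of $\norm{\scrA(M-\Mst)}^2$:
\begin{itemize}
\item $-1$ from $\ip{G}{\Udt\Vdt^\top}$, using $\Udt\Vdt^\top = \Mst + M - \Ust(VQ)^\top - UQ\Vst^\top$;
\item $+2$ from the square;
\item $-4$ from the cross term;
\end{itemize}
for a total of $-3$. The $\lambda$-terms collect into $\lambda(\nucnorm{\Mst} + 4\ip{E}{\Mst} - 5\nucnorm{M} + 2\ip{E}{T})$. Using $\opnorm{E} = 1$, this is at most $5\lambda(\nucnorm{\Mst} - \nucnorm{M}) + 2\lambda\nucnorm{T}$. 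The $\xi$-terms collect into $5\ip{\scrA^*(\xi)}{M - \Mst} - 2\ip{\scrA^*(\xi)}{T}$.
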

Unlike for \Cref{lem:balred}, it is unclear how this could be generalized to non-quadratic objective functions.
It is interesting to compare \Cref{lem:basic} to \cite[Lem.~7]{Ge2017}: in the symmetric case with $\xi = 0$ and $\lambda = 0$, they obtain
\[
	3 \norm{\scrA(M - \Mst)}^2 \leq \norm{\scrA(\Delta_U \Delta_U\transpose)}^2,
\]
whereas our result instead has the quantity $\frac{1}{2} \norm{\scrA(\Delta_U \Ust\transpose + \Ust \Delta_U\transpose)}^2$ on the right-hand side.
Although one might expect $\norm{\scrA(\Delta_U \Delta_U\transpose)}^2$ to be smaller in many situations (e.g., when $\Delta_U$ is small),
it can be challenging to upper bound for random $\scrA$
because, in general, $\Delta_U \Delta_U\transpose \notin \Tst$.
\begin{proof}[Proof of \Cref{lem:basic}]
	We will use, in the Hessian inequality \eqref{eq:hessineq},
	$\Udt = \Ust Q\transpose - U$ and $\Vdt = \Vst Q\transpose - V$.
	The rest of the proof is a series of straightforward but tedious computations.
	We start with a long list of useful identities.
	
	The fact that $(\Ust, \Vst)$ and $(U, V)$ are balanced implies that
	\begin{equation}
		\label{eq:id_nn_bal}
		\normF{\Ust}^2 = \normF{\Vst}^2 = \nucnorm{\Mst}, \quad \text{and} \quad \normF{U}^2 = \normF{V}^2 = \nucnorm{M}.
	\end{equation}
	Next, note that
	\begin{equation}
		\label{eq:key_id_1}
		\begin{aligned}
			U \Vdt\transpose + \Udt V\transpose
			&= U Q \Vst\transpose + \Ust Q\transpose V\transpose - 2 M \\
			&= \Delta_U \Vst\transpose + \Ust \Delta_V\transpose - 2(M - \Mst),
		\end{aligned}
	\end{equation}
	and
	\begin{equation}
		\label{eq:key_id_12}
		\Udt \Vdt\transpose = \Mst + M - \Ust(VQ)\transpose - U Q \Vst\transpose.
	\end{equation}
	From \eqref{eq:key_id_1}, we also have
	\begin{equation}
		\label{eq:key_id_2}
		\scrA(U \Vdt\transpose + \Udt V\transpose)
		= \scrA(\Delta_U \Vst\transpose + \Ust \Delta_V\transpose) - 2(\scrA(M) - \y) - 2\xi.
	\end{equation}
	Now, let $E \in \Mspace$ be any subgradient to $\nucnorm{\cdot}$ at the point $M$ (e.g., the polar factor of $M$).
	As $(U, V)$ is balanced, we must have
	\begin{equation}
		\label{eq:key_id_3}
		E V = U, \quad E\transpose U = V, \quad \text{and} \quad \ip{E}{M} = \nucnorm{M}.
	\end{equation}
	This together with \eqref{eq:key_id_1} implies
	\begin{align*}
		\ip{E}{U \Vdt\transpose + \Udt V\transpose}
		&= \ip{E}{\Delta_U \Vst\transpose + \Ust \Delta_V\transpose} - 2 \nucnorm{M} + 2 \ip{E}{\Mst}
	\end{align*}
	From \eqref{eq:key_id_12} and \eqref{eq:key_id_3}, we also have
	\begin{align*}
		\ip{E}{\Udt \Vdt\transpose}
		= \ip{E}{\Mst} + \nucnorm{M} - \ip{\Ust}{UQ} - \ip{\Vst}{VQ}.
	\end{align*}
	We can furthermore calculate, using \eqref{eq:id_nn_bal},
	\begin{align*}
		\frac{\normF{\Udt}^2 + \normF{\Vdt}^2}{2}
		&= \frac{\normF{ \Ust Q\transpose - U }^2 + \normF{\Vst Q\transpose - V}^2}{2} \\
		&= \frac{1}{2}( \normF{\Ust}^2 + \normF{U}^2 - 2 \ip{\Ust}{ U Q } + \normF{\Vst}^2 + \normF{V}^2 - 2\ip{\Vst}{VQ} ) \\
		&= \nucnorm{\Mst} + \nucnorm{M} - \ip{\Ust}{UQ} - \ip{\Vst}{VQ}.
	\end{align*}
	From these last three identities, we can estimate the quantity (which will appear later)%
	\begin{equation}
		\label{eq:key_id_4}
		\begin{aligned}
			\qquad&\negqquad \ip{E}{\Mst - M} + 2\ip{E}{U \Vdt\transpose + \Udt V\transpose} - \ip{E}{\Udt \Vdt\transpose} + \frac{ \normF{\Udt}^2 + \normF{\Vdt}^2 }{2} \\
			&= \nucnorm{\Mst} + 4 \ip{E}{\Mst} - 5 \nucnorm{M} + 2\ip{E}{\Delta_U \Vst\transpose + \Ust \Delta_V\transpose} \\
			&\leq 5 (\nucnorm{\Mst} - \nucnorm{M}) + 2 \nucnorm{\Delta_U \Vst\transpose + \Ust \Delta_V\transpose},
		\end{aligned}
	\end{equation}
	where the inequality follows from the fact that $\opnorm{E} = 1$.
	
	Now, we use the first-order condition \eqref{eq:gradzero}.
	This together with \eqref{eq:key_id_3} implies that, for any $L \in \Lspace$, $R \in \Rspace$,
	\begin{equation*}
		\ip{\scrA^*(\scrA(M) - \y) + \lambda E}{L V\transpose + U R\transpose} = 0.
	\end{equation*}
	In particular, we obtain
	\begin{equation}
		\label{eq:fo_lam_1}
		\ip{\scrA^*(\scrA(M) - \y) + \lambda E}{U \Vdt\transpose + \Udt V\transpose} = 0,
	\end{equation}
	then, by \eqref{eq:key_id_1},
	\begin{equation}\label{eq:fo_lam_2}
		\begin{aligned}
			\qquad&\negqquad \ip{\scrA^*(\scrA(M) - \y) + \lambda E}{ \Delta_U \Vst\transpose + \Ust \Delta_V\transpose } \\
			&= \ip{ \scrA^*(\scrA(M) - \y) + \lambda E }{2(M - \Mst)} \\
			&= 2 \norm{\scrA(M - \Mst)}^2 - 2 \ip{\scrA^*(\xi) - \lambda E}{M - \Mst},
		\end{aligned}
	\end{equation}
	and, finally, by \eqref{eq:key_id_12},
	\begin{equation}
		\label{eq:fo_lam_3}
		\begin{aligned}
			\ip{\scrA^*(\scrA(M) - \y) + \lambda E}{\Udt \Vdt\transpose}
			&= \ip{\scrA^*(\scrA(M) - \y) + \lambda E}{\Mst - M} \\
			&= - \norm{\scrA(M - \Mst)}^2 + \ip{\scrA^*(\xi) - \lambda E}{M - \Mst}.
		\end{aligned}
	\end{equation}
	We can then laboriously calculate (silently using \eqref{eq:key_id_1} and \eqref{eq:key_id_2} several times)
	\begin{align*}
		\qquad&\negqquad \frac{1}{2} \norm{\scrA( U \Vdt\transpose + \Udt V\transpose )}^2 \\
		&= \frac{1}{2} \ip{ \scrA(\Delta_U \Vst\transpose + \Ust \Delta_V\transpose) }{\scrA(U \Vdt\transpose + \Udt V\transpose)} \\
		&\qquad - \underbrace{\ip{\scrA^*(\scrA(M) - \y) + \lambda E}{U \Vdt\transpose + \Udt V\transpose}}_{= 0 \text{ by \eqref{eq:fo_lam_1}}} - \ip{\scrA^*(\xi) - \lambda E}{ U \Vdt\transpose + \Udt V\transpose } \\
		&= \frac{1}{2} \norm{\scrA(\Delta_U \Vst\transpose + \Ust \Delta_V\transpose)}^2 - \ip{ \Delta_U \Vst\transpose + \Ust \Delta_V\transpose }{\scrA^*(\scrA(M) - \y) + \lambda E} \\
		&\qquad - \ip{ \Delta_U \Vst\transpose + \Ust \Delta_V\transpose }{\scrA^*(\xi) - \lambda E}
		- \ip{\scrA^*(\xi) - \lambda E}{ U \Vdt\transpose + \Udt V\transpose } \\
		&\overset{\mathclap{\eqref{eq:fo_lam_2}}}{=} \frac{1}{2} \norm{\scrA(\Delta_U \Vst\transpose + \Ust \Delta_V\transpose)}^2 - 2 \norm{\scrA(M - \Mst)}^2 + 2 \ip{M - \Mst}{\scrA^*(\xi) - \lambda E} \\
		&\qquad - \ip{ \Delta_U \Vst\transpose + \Ust \Delta_V\transpose }{\scrA^*(\xi) - \lambda E}
		- \ip{\scrA^*(\xi) - \lambda E}{U \Vdt\transpose + \Udt V\transpose} \\
		&= \frac{1}{2} \norm{\scrA(\Delta_U \Vst\transpose + \Ust \Delta_V\transpose)}^2 - 2 \norm{\scrA(M - \Mst)}^2 - 2 \ip{\scrA^*(\xi) -\lambda E}{U \Vdt\transpose + \Udt V\transpose}.
	\end{align*}
	This together with the Hessian inequality \eqref{eq:hessineq} and \eqref{eq:key_id_4} and \eqref{eq:fo_lam_3} implies
	\begin{align*}
		0 &\leq \ip{\scrA^*(\scrA(M) - \y)}{\Udt \Vdt\transpose} + \frac{1}{2} \norm{\scrA( U \Vdt\transpose + \Udt V\transpose )}^2 + \frac{\lambda}{2} ( \normF{\Udt}^2 + \normF{\Vdt}^2) \\
		&= \ip{\scrA^*(\scrA(M) - \y) + \lambda E}{\Udt \Vdt\transpose} + \frac{1}{2} \norm{\scrA( U \Vdt\transpose + \Udt V\transpose )}^2 \\
		&\qquad- \lambda\ip{E}{\Udt \Vdt\transpose } + \frac{\lambda}{2} ( \normF{\Udt}^2 + \normF{\Vdt}^2) \\
		&= - 3 \norm{\scrA(M - \Mst)}^2 + \frac{1}{2} \norm{\scrA(\Delta_U \Vst\transpose + \Ust \Delta_V\transpose)}^2 \\
		&\qquad+ \ip{\scrA^*(\xi)}{M - \Mst - 2(U \Vdt\transpose + \Udt V\transpose)} \\
		&\qquad + \lambda \parens*{ \ip{E}{\Mst - M} - \ip{E}{\Udt \Vdt\transpose} + \frac{ \normF{\Udt}^2 + \normF{\Vdt}^2 }{2} + 2\ip{E}{U \Vdt\transpose + \Udt V\transpose} } \\
		&\leq - 3 \norm{\scrA(M - \Mst)}^2 + \frac{1}{2} \norm{\scrA(\Delta_U \Vst\transpose + \Ust \Delta_V\transpose)}^2 \\
		&\qquad + \ip{\scrA^*(\xi)}{5(M - \Mst) - 2(\Delta_U \Vst\transpose + \Ust \Delta_V\transpose)} \\
		&\qquad + \lambda\parens{5 (\nucnorm{\Mst} - \nucnorm{M}) + 2 \nucnorm{\Delta_U \Vst\transpose + \Ust \Delta_V\transpose}}.
	\end{align*}
	This completes the proof.
\end{proof}

\subsection{A norm inequality}
The following lemma will allow us to bound the quantity $\Delta_U \Vst\transpose + \Ust \Delta_V\transpose$ that appears in \Cref{lem:basic}:
\begin{lemma}
	\label{lem:normineq}
	Let $\Mst = \Ust \Vst\transpose$ for $\Ust \in \Lstspace$, $\Vst \in \Rstspace$ with $\Ust\transpose \Ust = \Vst\transpose \Vst$.
	Let $U \in \Lspace$, $V \in \Rspace$ be balanced (i.e., $U\transpose U = V\transpose V$),
	and set $M \coloneqq U V\transpose$.
	
	Let $Q \in \R^{\ropt \times \rgt}$ be the polar factor of $U \transpose \Ust + V\transpose 
	\Vst$.
	With $\Delta_U \coloneqq U Q - \Ust$ and $\Delta_V \coloneqq V Q - \Vst$ as in \Cref{lem:basic}, we have
	\[
		\normF{M - \Mst}^2 \geq \frac{1}{\sqrt{5} + 2} \normF{\Delta_U \Vst\transpose + \Ust \Delta_V\transpose}^2.
	\]
	The same holds in the symmetric case with $d_1 = d_2 = d$, $\Ust = \Vst$, and $U = V$.
\end{lemma}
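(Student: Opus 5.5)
The plan is to reduce to the symmetric PSD case by the standard lifting, and then prove a symmetric ``tangent-versus-error'' inequality for Procrustes-aligned factors.

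\textbf{Lifting.} Set $W \coloneqq \begin{bmatrix} U \\ V \end{bmatrix}$, $W_* \coloneqq \begin{bmatrix} \Ust \\ \Vst \end{bmatrix}$ and $\Delta \coloneqq WQ - W_*$, so that $\Delta = \begin{bmatrix} \Delta_U \\ \Delta_V \end{bmatrix}$. Since $W\transpose W_* = U\transpose \Ust + V\transpose \Vst$, the matrix $Q$ is exactly the Procrustes alignment of $W$ to $W_*$. The off-diagonal block of $\Delta W_*\transpose + W_* \Delta\transpose$ is $\Delta_U \Vst\transpose + \Ust \Delta_V\transpose$, and it appears twice in the full matrix. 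Hence
\[
	2 \normF{\Delta_U \Vst\transpose + \Ust \Delta_V\transpose}^2 \leq \normF{\Delta W_*\transpose + W_* \Delta\transpose}^2 .
\]

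\textbf{Lifted error versus $\normF{M - \Mst}$.} Expand
\[
	\normF{W W\transpose - W_* W_*\transpose}^2 = 2\normF{M-\Mst}^2 + \normF{UU\transpose - \Ust\Ust\transpose}^2 + \normF{VV\transpose - \Vst\Vst\transpose}^2 .
\]
Write everything in terms of Gram matrices, for example $\normF{UU\transpose - \Ust\Ust\transpose}^2 = \normF{U\transpose U}^2 + \normF{\Ust\transpose\Ust}^2 - 2\normF{U\transpose \Ust}^2$. Using balancedness, $U\transpose U = V\transpose V$ and $\Ust\transpose\Ust = \Vst\transpose\Vst$, the diagonal blocks sum to at most $2\normF{M-\Mst}^2$. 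The difference between the two sides is $2\normF{U\transpose\Ust - V\transpose\Vst}^2 \geq 0$. Therefore
\[
	\normF{WW\transpose - W_*W_*\transpose}^2 \leq 4\normF{M - \Mst}^2 .
\]
So it suffices to prove the symmetric inequality
\[
	\normF{\Delta W_*\transpose + W_*\Delta\transpose}^2 \leq 2(\sqrt5+2)\normF{WW\transpose - W_*W_*\transpose}^2 .
\]
The symmetric case of the lemma is then the same statement applied directly, with the lifting step skipped and the constants rescaled accordingly.

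\textbf{Removing the extra rank.} Put $X \coloneqq WQ$ and $P \coloneqq W(I - QQ\transpose)W\transpose \succeq 0$. Then $WW\transpose - W_*W_*\transpose = (XX\transpose - W_*W_*\transpose) + P$. The range of $W\transpose W_*$ lies in $\range(Q)$, so $PW_* = 0$. Also $\ip{XX\transpose}{P} \geq 0$, since both matrices are PSD. Together these give
\[
	\normF{WW\transpose - W_*W_*\transpose}^2 \geq \normF{XX\transpose - W_*W_*\transpose}^2 .
\]
We are thus reduced to the square case $X, W_* \in \R^{D\times \rgt}$. Here the alignment condition says $X\transpose W_* = Q\transpose W\transpose W_*$ is symmetric PSD, so $\Delta\transpose W_*$ is symmetric.

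\textbf{Core estimate (main obstacle).} Write $XX\transpose - W_*W_*\transpose = T + \Delta\Delta\transpose$ with $T \coloneqq \Delta W_*\transpose + W_*\Delta\transpose$. Expanding,
\[
	\normF{T + \Delta\Delta\transpose}^2 = \normF{T}^2 + 4\ip{W_*\transpose\Delta}{\Delta\transpose\Delta} + \normF{\Delta\Delta\transpose}^2 .
\]
The cross term is the danger, since it can be negative. I would control it using $X\transpose W_* = W_*\transpose W_* + \Delta\transpose W_* \succeq 0$, which gives $\Delta\transpose W_* \succeq -W_*\transpose W_*$. I would also use $\normF{T}^2 \geq 2\normF{\Delta W_*\transpose}^2$, which holds because $\ip{\Delta W_*\transpose}{W_*\Delta\transpose} = \normF{W_*\transpose\Delta}^2 \geq 0$ by symmetry of $\Delta\transpose W_*$. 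The plan is then to bound $\abs{\ip{W_*\transpose\Delta}{\Delta\transpose\Delta}}$ by a product such as $\normF{T}\,\normF{\Delta\Delta\transpose}$ times a constant, or by a sum of the two squared terms split via a parameter $t$. This yields a quadratic form in $(\normF{T}, \normF{\Delta\Delta\transpose})$, and minimizing it over the ratio should produce the constant $1/(\sqrt5+2) = \sqrt5 - 2$, as the root of $x^2+4x-1$. Getting this constant sharp, and possibly treating separately the regime where $\Delta\Delta\transpose$ dominates (via the PSD constraint), is where I expect the real work to lie.
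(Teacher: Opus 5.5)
\textbf{There is a genuine gap in your asymmetric reduction: it loses a constant factor that cannot be recovered.} Your symmetric ingredients are right and match the paper: the Procrustes alignment, discarding $W(I-QQ\transpose)W\transpose$, symmetry of $\Wst\transpose\Delta$, and $\Wst\transpose X \succeq 0$. The trouble is the lifting step. Write $T \coloneqq \Delta\Wst\transpose + \Wst\Delta\transpose$ and $T_{12} \coloneqq \Delta_U\Vst\transpose + \Ust\Delta_V\transpose$.

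First, there is an arithmetic slip. Your chain $2\normF{T_{12}}^2 \le \normF{T}^2 \le 2(\sqrt5+2)\normF{WW\transpose-\Wst\Wst\transpose}^2 \le 8(\sqrt5+2)\normF{M-\Mst}^2$ only gives $\normF{M-\Mst}^2 \ge \frac{1}{4(\sqrt5+2)}\normF{T_{12}}^2$. So the ``it suffices'' claim is off by a factor $4$.

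More seriously, through your two outer inequalities the lemma would need $\normF{T}^2 \le \frac{\sqrt5+2}{2}\normF{WW\transpose-\Wst\Wst\transpose}^2$. This is false even for lifted balanced pairs. Take $d_1=d_2=\ropt=\rgt=1$, $\Ust=\Vst=1$, and $U=V=\epsilon\in(0,1)$. Then $Q=1$, $\normF{T}^2=16(1-\epsilon)^2$ and $\normF{WW\transpose-\Wst\Wst\transpose}^2=4(1-\epsilon^2)^2$, so the ratio tends to $4$. Hence even with the sharp symmetric constant, your route proves only $\frac{\sqrt5-2}{2}$. The loss comes from $2\normF{T_{12}}^2\le\normF{T}^2$. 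The diagonal blocks $\Delta_U\Ust\transpose+\Ust\Delta_U\transpose$ and $\Delta_V\Vst\transpose+\Vst\Delta_V\transpose$ are typically as large as the off-diagonal ones; in the example all four blocks coincide. Discarding them costs a factor $2$.

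The missing idea is exact bookkeeping with $J\coloneqq\operatorname{diag}(I_{d_1},-I_{d_2})$. You already derived $\normF{M-\Mst}^2=\frac14\normF{WW\transpose-\Wst\Wst\transpose}^2+\frac12\normF{G}^2$ with $G\coloneqq U\transpose\Ust-V\transpose\Vst$; keep the $G$-term rather than dropping it. On the other side, $\normF{T_{12}}^2=\frac14\normF{T}^2-\frac14\ip{T}{JTJ}$. Since $\Wst\transpose J\Wst=0$ and $\Wst\transpose J\Delta=G\transpose Q$, you get $\ip{T}{JTJ}=2\tr[(G\transpose Q)^2]\ge-2\normF{G}^2$, that is, $\normF{T_{12}}^2\le\frac14\normF{T}^2+\frac12\normF{G}^2$. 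Combine this with the symmetric bound $\normF{WW\transpose-\Wst\Wst\transpose}^2\ge c\normF{T}^2$, where $c=\sqrt5-2<1$. The result is $\normF{M-\Mst}^2\ge c\normF{T_{12}}^2+\frac{1-c}{2}\normF{G}^2$. The $G$-term you discarded is exactly what pays for the diagonal blocks; this is the paper's argument.

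\textbf{Your core symmetric estimate is also incomplete.} Bounding $\abs{\ip{\Wst\transpose\Delta}{\Delta\transpose\Delta}}$ by a multiple of $\normF{T}\normF{\Delta\Delta\transpose}$ cannot give a positive constant. The best valid multiple is $\frac12$, and that yields only $(\normF{T}-\normF{\Delta\Delta\transpose})^2\ge0$. The alignment must enter one-sidedly, as $\ip{\Wst\transpose\Delta}{\Delta\transpose\Delta}\ge -a$ with $a\coloneqq\normF{\Delta\Wst\transpose}^2$. It is then combined with Cauchy--Schwarz against $b\coloneqq\normF{\Wst\transpose\Delta}^2$, keeping $a$ and $b$ separate. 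The paper's certificate is
\[
\normF{T+\Delta\Delta\transpose}^2=(\sqrt5-2)\normF{T}^2+(6-2\sqrt5)\tr[\Wst\transpose X\Delta\transpose\Delta]+\normF{\Delta\transpose\Delta+(\sqrt5-1)\Wst\transpose\Delta}^2,
\]
where the middle term is nonnegative because $\Wst\transpose X\succeq0$.
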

\begin{proof}
	In contrast to the preceding proofs,
	we first consider the symmetric case;
	we will then analyze the \emph{a}symmetric case by a standard reduction.
	$Q$ is now a polar factor of $U\transpose \Ust$,
	and we set $\Delta \coloneqq U Q - \Ust$.
	With this choice of $Q$, $\Ust\transpose U Q \in \R^{\rgt \times \rgt}$ is symmetric and positive semidefinite.
	This further implies that $\Ust\transpose \Delta = \Ust \transpose U Q - \Ust\transpose \Ust$ is symmetric.
	
	Noting, in addition, that $\Ust\transpose U(I_\ropt - Q Q\transpose) = 0$,
	we can first bound
	\begin{align*}
		\normF{M - \Mst}^2
		&= \normF{U U\transpose - \Ust \Ust\transpose}^2 \\
		&= \normF{U Q Q\transpose U\transpose - \Ust \Ust\transpose + U (I_{\ropt} - Q Q\transpose) U\transpose}^2 \\
		&\geq \normF{U Q Q\transpose U\transpose - \Ust \Ust\transpose}^2 + 2 \underbrace{\ip{U Q Q\transpose U\transpose - \Ust \Ust\transpose}{U (I_{\ropt} - Q Q\transpose) U\transpose}}_{\geq 0} \\
		&\geq \normF{(\Ust + \Delta)(\Ust + \Delta)\transpose - \Ust \Ust\transpose}^2 \\
		&= \normF{\Delta \Ust\transpose + \Ust \Delta\transpose + \Delta \Delta\transpose}^2.
	\end{align*}
	We now use an argument inspired by (but distinct from) \cite[pf.\ of Lem.~41]{Ge2017}.
	Starting from the previous inequality, we can calculate
	\begin{align*}
		\normF{M - \Mst}^2
		&\geq \tr[ 2 \Ust\transpose \Ust \Delta\transpose \Delta + 2(\Ust\transpose \Delta)^2 + (\Delta \transpose \Delta)^2 + 4 \Ust\transpose \Delta \Delta\transpose \Delta ] \\
		&= (2 \sqrt{5} - 4) \underbrace{\tr[ \Ust\transpose \Ust \Delta\transpose \Delta + (\Ust\transpose \Delta)^2 ]}_{= \frac{1}{2} \normF{\Delta \Ust\transpose + \Ust \Delta\transpose}^2 } \\
		&\qquad + (6 - 2 \sqrt{5}) \underbrace{\tr[ \Ust\transpose (\Ust + \Delta) \Delta\transpose \Delta ]}_{= \ip{\Ust \transpose U Q}{\Delta \transpose \Delta} \geq 0}
		+ \underbrace{\tr[ (\Delta\transpose \Delta + (\sqrt{5} - 1) \Ust\transpose \Delta)^2 ]}_{= \normF{\Delta\transpose \Delta + (\sqrt{5} - 1) \Ust\transpose \Delta}^2 \geq 0} \\
		&\geq (\sqrt{5} - 2)\normF{\Delta \Ust\transpose + \Ust \Delta\transpose}^2.
	\end{align*}
	The claimed inequalities in the penultimate line hold due to the facts that $\Ust\transpose U Q \succeq 0$ and $\Ust\transpose \Delta$ is symmetric.
	Noting that $\sqrt{5} - 2 = \frac{1}{\sqrt{5} + 2}$, this completes the proof in the symmetric case.
	
	For the asymmetric case, rather than attempting the calculations directly,
	we use a standard reduction to the symmetric case.
	See, for example, \cite[Sec.~3.2]{Zhang2025a}, whose notation we adapt.
	For a symmetric block matrix of the form
	\[
		X = \begin{bmatrix*}
			X_{11} & X_{12} \\
			X_{12}\transpose & X_{22}
		\end{bmatrix*} \in \symms_{d_1 + d_2}
		\quad \text{for} \quad X_{11} \in \symms_{d_1},\ X_{12} \in \Mspace, \ X_{22} \in \symms_{d_2},
	\]
	we have
	\begin{equation}
		\label{eq:X_decomp}
		\begin{aligned}
			\normF{X_{12}}^2
			&= \frac{1}{4} ( \normF{X}^2 - \ip{X}{J X J} ),
			\quad \text{where} \quad J \coloneqq \begin{bmatrix*}
				I_{d_1} & 0_{d_1 \times d_2} \\
				0_{d_2 \times d_1} & -I_{d_2}
			\end{bmatrix*}.
		\end{aligned}
	\end{equation}
	Now, to study $\normF{M - \Mst}^2 = \normF{U V\transpose - \Ust \Vst\transpose}^2$,
	we set
	\[
		W \coloneqq \begin{bmatrix*}
			U \\ V
		\end{bmatrix*},
		\quad
		\Wst \coloneqq \begin{bmatrix*}
			\Ust \\ \Vst
		\end{bmatrix*},
		\quad \text{and} \quad
		\Delta_W \coloneqq W Q - \Wst.
	\]
	With $X \coloneqq W W\transpose - \Wst \Wst\transpose$, we have, from \eqref{eq:X_decomp},
	\begin{equation}
		\begin{aligned}
			\normF{M - \Mst}^2
			= \normF{X_{12}}^2
			&= \frac{1}{4} \normF{W W\transpose - \Wst \Wst\transpose}^2 \\
			&\qquad  - \frac{1}{4} \ip{W W\transpose - \Wst \Wst\transpose}{J(W W\transpose - \Wst \Wst\transpose)J} .
		\end{aligned}
		\label{eq:err_W_expr}
	\end{equation}
	We have assumed that $Q$ is a polar factor of $U\transpose \Ust + V\transpose \Vst = W\transpose \Wst$.
	Therefore, by the symmetric case (with $d = d_1 + d_2$), we have the bound
	\begin{equation}
		\normF{W W\transpose - \Wst \Wst\transpose}^2
		\geq \frac{1}{\sqrt{5} + 2} \normF{\Delta_W \Wst\transpose + \Wst \Delta_W\transpose}^2.
		\label{eq:err_W_lb}
	\end{equation}
	To calculate the inner product involving $J$ in \eqref{eq:err_W_expr},
	note that, because $(U, V)$ and $(\Ust, \Vst)$ are balanced,
	\[
		W\transpose J W = U\transpose U - V\transpose V = 0, \quad \text{and} \quad \Wst\transpose J \Wst = \Ust\transpose \Ust - \Vst\transpose \Vst = 0.
	\]
	Therefore
	\begin{align*}
		\ip{W W\transpose - \Wst \Wst\transpose}{J(W W\transpose - \Wst \Wst\transpose)J}
		&= - 2\ip{\Wst \Wst\transpose}{J W W\transpose J} \\
		&= -2 \normF{W\transpose J \Wst}^2 \\
		&= -2 \normF{U\transpose \Ust - V\transpose \Vst}^2.
	\end{align*}
	Plugging this and \eqref{eq:err_W_lb} into \eqref{eq:err_W_expr},
	we have
	\begin{equation}
		\label{eq:symm_lb_1}
		\normF{M - \Mst}^2 \geq \frac{1}{4(\sqrt{5} + 2)} \normF{\Delta_W \Wst\transpose + \Wst \Delta_W\transpose}^2 + \frac{1}{2} \normF{U\transpose \Ust - V\transpose \Vst}^2.
	\end{equation}
	To get the final result,
	we now need to estimate $\normF{\Delta_W \Wst\transpose + \Wst \Delta_W\transpose}^2$ in terms of $\normF{\Delta_U \Vst\transpose + \Ust \Delta_V\transpose}^2$.
	Taking, now, $X \coloneqq \Delta_W \Wst\transpose + \Wst \Delta_W\transpose$ in \eqref{eq:X_decomp},
	we have
	\begin{align*}
		\normF{\Delta_U \Vst\transpose + \Ust \Delta_V\transpose}^2
		&= \frac{1}{4} \normF{\Delta_W \Wst\transpose + \Wst \Delta_W\transpose}^2 \\
		&\qquad - \frac{1}{4} \ip{\Delta_W \Wst\transpose + \Wst \Delta_W\transpose}{J (\Delta_W \Wst\transpose + \Wst \Delta_W\transpose) J}.
	\end{align*}
	The fact that $\Wst \transpose J \Wst = 0$ implies
	\begin{align*}
		\ip{\Delta_W \Wst\transpose + \Wst \Delta_W\transpose}{J (\Delta_W \Wst\transpose + \Wst \Delta_W\transpose) J}
		&= 2 \tr[ (\Wst\transpose J \Delta_W)^2 ] \\
		&= 2 \tr[ (\Wst\transpose J W Q)^2 ] \\
		&= 2 \tr[ ( (\Ust\transpose U - \Vst\transpose V )Q )^2 ] \\
		&\geq -2 \normF{U\transpose \Ust - V\transpose \Vst}^2.
	\end{align*}
	We therefore have
	\begin{align*}
		\frac{1}{4} \normF{\Delta_W \Wst\transpose + \Wst \Delta_W\transpose}^2
		&= \normF{\Delta_U \Vst\transpose + \Ust \Delta_V\transpose}^2 \\
		&\qquad + \frac{1}{4} \ip{\Delta_W \Wst\transpose + \Wst \Delta_W\transpose}{J (\Delta_W \Wst\transpose + \Wst \Delta_W\transpose) J} \\
		&\geq \normF{\Delta_U \Vst\transpose + \Ust \Delta_V\transpose}^2
		- \frac{1}{2} \normF{U\transpose \Ust - V\transpose \Vst}^2.
	\end{align*}
	Plugging this into \eqref{eq:symm_lb_1} gives
	\[
		\normF{M - \Mst}^2 \geq \frac{1}{\sqrt{5} + 2} \normF{\Delta_U \Vst\transpose + \Ust \Delta_V\transpose}^2 + \frac{1}{2} \parens*{ 1 - \frac{1}{\sqrt{5} + 2}} \normF{U\transpose \Ust - V\transpose \Vst}^2.
	\]
	Dropping the (nonnegative) second term, we obtain the result in the asymmetric case.
\end{proof}

\subsection{Putting it together}
We can now prove \Cref{thm:determ} with the tools developed in the preceding subsections.
We write $\Mst = \Ust \Vst\transpose$ for some $\Ust \in \Lstspace$, $\Vst \in \Rstspace$ with $\Ust\transpose \Ust = \Vst\transpose \Vst$.
Let $(U, V)$ be a second-order critical point of \eqref{eq:lasso_fact},
and set $M \coloneqq U V\transpose$
(in the symmetric case, $\Ust = \Vst$, and $U = V$ is a second-order critical point of \eqref{eq:lasso_sym_fact}).
Let $H \coloneqq M - \Mst$.

By \Cref{lem:balred}, we can assume, without loss of generality, that $(U, V)$ is balanced.
As in \Cref{lem:normineq}, let $Q \in \R^{\ropt \times \rgt}$ be a polar factor of $U\transpose \Ust + V\transpose \Vst$.
As in \Cref{lem:basic,lem:normineq}, we set $\Delta_U \coloneqq U Q - \Ust$ and $\Delta_V \coloneqq V Q - \Vst$.
Then, noting that $\Delta_U \Vst\transpose + \Ust \Delta_V\transpose \in \Tst$,
our isometry assumptions on $\scrA$ together with \Cref{lem:normineq} give
\begin{align*}
	3 \norm{\scrA(H)}^2 - \frac{1}{2} \norm{\scrA(\Delta_U \Vst\transpose + \Ust \Delta_V\transpose)}^2
	&\geq 3 \alpha \normF{H}^2 - \frac{\beta}{2} \normF{ \Delta_U \Vst\transpose + \Ust \Delta_V\transpose }^2 \\
	&\geq \underbrace{\parens*{3 \alpha - \frac{\sqrt{5} + 2}{2} \beta }}_{\eqqcolon \mu} \normF{H}^2.
\end{align*}
Together with this inequality, \Cref{lem:basic} implies
\begin{equation*}
	\begin{aligned}
		\mu \normF{H}^2
		&\leq 5 \ip{\scrA^*(\xi)}{H} - 2 \ip{\scrA^*(\xi)}{\Delta_U \Vst\transpose + \Ust \Delta_V\transpose} \\
		&\qquad + 5\lambda(\nucnorm{\Mst} - \nucnorm{M}) + 2 \lambda \nucnorm{\Delta_U \Vst\transpose + \Ust \Delta_V\transpose} \\
		&\leq 5[ \opnorm{\scrA^*(\xi)} \nucnorm{H} + \lambda (\nucnorm{\Mst} - \nucnorm{M}) ]
		\\
		&\qquad+ 2 (\opnorm{\scrA^*(\xi)} + \lambda) \nucnorm{\Delta_U \Vst\transpose + \Ust \Delta_V\transpose}.
	\end{aligned}
\end{equation*}
Standard arguments based on nuclear norm geometry (see \cite[Sec.~5.1.1]{McRae2026preprint}) give
\begin{align*}
	&\opnorm{\scrA^*(\xi)} \nucnorm{H} + \lambda(\nucnorm{\Mst} - \nucnorm{M}) \\
	&\qquad\leq [(1 + \sqrt{2}) \sqrt{\rgt} \lambda + \sqrt{\ropt + \rgt} ( \opnorm{\scrA^*(\xi)} - \lambda )_+] \normF{H}.
\end{align*}
Furthermore, as $\Delta_U \Vst\transpose + \Ust \Delta_V\transpose \in \Tst$ has rank at most $2 \rgt$, we have (again using \Cref{lem:normineq})
\[
	\nucnorm{\Delta_U \Vst\transpose + \Ust \Delta_V\transpose}
	\leq \sqrt{2 \rgt} \normF{ \Delta_U \Vst\transpose + \Ust \Delta_V\transpose }
	\leq \sqrt{2 (\sqrt{5} + 2) \rgt } \normF{H}.
\]
Combining these last three displays, we obtain
\begin{align*}
	\mu\normF{H}
	&\leq \parens*{5(1 + \sqrt{2})\lambda + 2 \sqrt{2 (\sqrt{5} + 2)}(\opnorm{\scrA^*(\xi)} + \lambda) }\sqrt{\rgt} \normF{H} \\
	&\qquad + 5 \sqrt{\ropt + \rgt} ( \opnorm{\scrA^*(\xi)} - \lambda )_+ \normF{H}.
\end{align*}
Substituting in the value of $\mu$, rearranging, and (for simplicity) rounding the constants complete the proof of \Cref{thm:determ}.

\section{Matrix sensing with random measurements}
\label{sec:ms}
In this section, we apply our deterministic result \Cref{thm:determ} to the case where the matrices $A_1, \dots, A_n$ in \eqref{eq:ms_model} are chosen randomly.
Throughout the section, we will use $C, c$, etc.\ to denote positive constants that are universal (i.e., they do not depend on problem parameters) but which may change from one use to another.

\subsection{General random measurements}
We have the following general result, from which \Cref{thm:ms}, presented in \Cref{sec:intro}, will follow.
\begin{theorem}
	\label{thm:ms_gen}
	In the asymmetric case (resp.\ symmetric case with $d_1 = d_2 = d$), under the model \eqref{eq:ms_model},
	suppose $A_1, \dots, A_n$ are independent copies of a zero-mean random matrix $A \in \Mspace$ (resp.\ $A \in \symspace$) satisfying the following properties:
	\begin{enumerate}
		\item \label{assump:isotropic} (Isotropy) For all $H \in \Mspace$ (resp.\ $H \in \symspace$), $\E\abs{\ip{A}{H}}^2 = \normF{H}^2$.
		\item \label{assump:moment} (Moment condition) For some $q > 4$ and $B \geq 1$, we have, for all $H \in \Mspace$ (resp.\ $H \in \symspace$),
		\[
			(\E\abs{\ip{A}{H}}^q)^{1/q} \leq B \normF{H}.
		\]
		\item \label{assump:bdd} (Bounded on $\Tst$) For some $\tau > 0$, with probability at least $1 - \frac{1}{d_1 + d_2}$,
		\begin{align*}
			\max_i~\normF{\PTst(A_i)} &\leq \tau \sqrt{\rgt (d_1 + d_2)},
		\end{align*}
		where $\PTst$ is the orthogonal projection onto $\Tst$ in $\Mspace$ (resp. $\symspace$).
	\end{enumerate}
	There is $C_q > 0$ depending only on $q$ such that, for any $\ropt \geq \rgt$, if
	\[
		n \geq C_q [ \tau^2 \rgt + B^4 \ropt \log (d_1 + d_2) ] (d_1 + d_2),
	\]
	then, with probability at least $1 - \frac{C}{d_1 + d_2}$, \socptext{}
	\[
		\normF{M - \Mst} \leq \frac{C}{n} \errbdcomps.
	\]
\end{theorem}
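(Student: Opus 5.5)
We apply \Cref{thm:determ} to the rescaled operator $\scrA/\sqrt{n}$. Since the problem \eqref{eq:lasso_fact} is invariant under this rescaling (with $\lambda \mapsto \lambda/n$, $\xi \mapsto \xi/\sqrt{n}$), the error bound \eqref{eq:errbd} for $\scrA/\sqrt n$ gives exactly the claimed $\frac{C}{n}\errbdcomps$ bound once we show the following. With probability at least $1 - C/(d_1+d_2)$, the operator $\frac{1}{\sqrt n}\scrA$ satisfies tangent space upper isometry with $\beta = 1 + \delta$ and restricted lower isometry on errors of rank at most $\ropt + \rgt$ with $\alpha = 1 - \delta$, for a small absolute $\delta$ (e.g.\ $\delta = 0.1$). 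This choice makes $\beta/\alpha < 6/(\sqrt5+2)$ and the denominator $6\alpha - (\sqrt5+2)\beta$ a positive constant. So the proof splits into two independent probabilistic estimates, combined by a union bound.

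\textbf{Upper isometry on $\Tst$.} This is a uniform bound over a fixed linear subspace of dimension $D \le \rgt(d_1+d_2)$. It is equivalent to $\opnorm{\frac1n\sum_i \PTst(A_i)\otimes\PTst(A_i) - \PTst} \le \delta$, viewing $X_i = \PTst(A_i)$ as random vectors in $\Tst$ with $\E X_i X_i\transpose = I_{\Tst}$ by isotropy. On the event of assumption \ref{assump:bdd}, $\normF{X_i}^2 \le \tau^2\rgt(d_1+d_2)$. I would apply matrix Bernstein/Chernoff, or Rudelson's inequality, to the truncated summands $X_iX_i\transpose \mathbf{1}\{\normF{X_i}\le \tau\sqrt{\rgt(d_1+d_2)}\}$; only the upper tail is needed. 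The truncation bias is controlled since truncation only decreases the sum in PSD order, and on the event the truncated and untruncated sums agree. This gives the bound once $n \gtrsim \tau^2 \rgt (d_1+d_2)\log(d_1+d_2)$. To avoid a log on the $\tau^2$ term, as in the statement, a matrix Chernoff upper-tail bound with constant deviation could be used, with $\log$ absorbed by allowing $\beta$ to be a fixed constant like $1.1$. Getting exactly the stated form may need care, but a log factor there is harmless for \Cref{thm:ms}.

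\textbf{Lower isometry on low-rank errors.} Here only heavy-tailed moment information is available, so I would use Mendelson's small-ball method, or equivalently the Koltchinskii--Mendelson lower bound for nonnegative empirical processes, over the set of unit-Frobenius matrices of rank at most $\ropt+\rgt$. The moment condition with $q>4$ gives, through Paley--Zygmund, a small-ball bound $\Pr(|\ip{A}{H}|\ge c) \ge c'$ with $c, c'$ depending on $B$. That alone yields only some constant $\alpha$, not $1-\delta$. To get $\alpha$ close to $1$, I would instead use the lower-tail result for sums of squares under $L_q$–$L_2$ norm equivalence (Oliveira / Koltchinskii--Mendelson, or Mendelson's ``$q>4$'' results). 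This gives $\inf_H \frac1n\sum_i \ip{A_i}{H}^2 \ge 1-\delta$ when $n \gtrsim_q B^4 \cdot (\text{complexity})$. The complexity of the rank-$(\ropt+\rgt)$ set is controlled by an $\epsilon$-net of size $(C/\epsilon)^{(\ropt+\rgt)(d_1+d_2+1)}$, producing the $B^4\ropt(d_1+d_2)\log(d_1+d_2)$ term. In the symmetric case the same argument runs within $\symspace$.

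\textbf{Main obstacle.} I expect the lower isometry with constant close to $1$ under only a $q>4$ moment assumption to be the hardest step. A net argument must handle the lack of a uniform upper isometry, since $\scrA$ is not bounded on general low-rank matrices. This is presumably why the $\log(d_1+d_2)$ and $B^4$ appear, and why a lower-tail-only truncation argument is needed: truncate $\ip{A_i}{H}^2$ at level $\sim B^2\sqrt{\log}$ and apply Bernstein pointwise plus a Lipschitz net argument on the truncated quantity. If that proves messy, a fallback is to accept a smaller absolute $\alpha$ and correspondingly tighter $\beta$; but since \eqref{eq:ab_cond} needs $\beta/\alpha<1.42$ and $\beta\ge1$, the lower constant must genuinely be near $1$, so this sharp lower-tail bound cannot be avoided.
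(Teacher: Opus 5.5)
Your overall architecture matches the paper's. You rescale by $n$ (equivalently, take $\alpha = n(1-\delta)$ and $\beta = n(1+\delta)$ for a fixed $\delta$ with $(1+\delta)/(1-\delta) < 6/(\sqrt{5}+2)$). You prove a lower isometry over matrices of rank at most $\ropt+\rgt$ and an upper isometry on $\Tst$, then take a union bound.

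The genuine gap is the upper isometry on $\Tst$ at the stated sample size. You would apply matrix Chernoff, matrix Bernstein or Rudelson's inequality using only the almost-sure bound $\normF{\PTst(A_i)}^2 \le \tau^2\rgt(d_1+d_2)$. All of these carry a dimensional prefactor $D=\dim\Tst$ in their tails. For any fixed deviation level the failure probability is of order $D\exp(-c\,n/(\tau^2\rgt(d_1+d_2)))$, which forces $n \gtrsim \tau^2\rgt(d_1+d_2)\log(d_1+d_2)$. Letting $\beta$ be a fixed constant such as $1.1$ does not absorb this logarithm: only a deviation growing with $D$ would, and \eqref{eq:ab_cond} forbids that. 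Under boundedness alone the logarithm is in fact necessary. Take $X=\pm\sqrt{D}\,e_J$ with a random sign and $J$ uniform; it is isotropic with $\norm{X}^2=D$, and coupon collecting shows $\opnorm*{\frac1n\sum_i X_iX_i\transpose}$ is close to $1$ only once $n\gtrsim D\log D$. So your argument proves a strictly weaker condition than $n \ge C_q[\tau^2\rgt + B^4\ropt\log(d_1+d_2)](d_1+d_2)$. The loss is not harmless for \Cref{thm:ms} either. There $\tau^2\rgt(d_1+d_2)\approx K_a^2K_b^2(\rgt+\log n)(d_1+d_2)$, so for $\ropt=\rgt=1$ you would need $n \gtrsim (d_1+d_2)\log^2(d_1+d_2)$ instead of $(d_1+d_2)\log(d_1+d_2)$.

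The missing idea is to use the $q>4$ moment condition for the upper bound too, through a heavy-tailed covariance-estimation bound that has no logarithm. The paper applies \Cref{lem:conc_tikh} (Tikhomirov) to $X_i=\PTst(A_i)$, viewed in $\R^D$ with $D\le\rgt(d_1+d_2)$. These vectors are isotropic and inherit $(\E\abs{\ip{X}{h}}^q)^{1/q}\le B\normF{h}$, because $\ip{\PTst(A)}{h}=\ip{A}{h}$ for $h\in\Tst$. So on the event of assumption~3, with probability at least $1-1/D$, $\opnorm*{\frac1n\sum_i X_iX_i\transpose - I_D}\le C_q\bigl(\tau^2\rgt(d_1+d_2)/n + B^2\sqrt{\rgt(d_1+d_2)/n}\bigr)$. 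This is exactly where the log-free $\tau^2\rgt$ term comes from.

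Your lower-isometry plan is fine in outline but heavier than needed. For fixed $H$, the lower tail for sums of nonnegative variables (\Cref{lem:nn_lower_tail}) requires only $\E\abs{\ip{A}{H}}^4\le B^4$, with no truncation. In the net step, truncating $\ip{A_i}{H}^2$ does not by itself give a usable Lipschitz constant, since you must still control $\sum_i\abs{\ip{A_i}{H-H'}}$. The paper uses the crude Markov bound $\opnorm{\scrA}\le\normF{\scrA}\le\sqrt{2nd_1d_2(d_1+d_2)}$. This permits a polynomially small $\epsilon$ at a cost of only $\log((d_1+d_2)/\delta)$ in the net entropy, and you would need the same device.
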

We prove this in \Cref{sec:proofs_ms_gen} below.
The $\frac{C}{d_1 + d_2}$ failure probability bound comes from existing probabilistic tools that we use (see \Cref{lem:conc_tikh} and subsequent discussion below);
this could likely be improved (perhaps requiring a larger value of $q$), but we do not pursue this here.

\subsection{Proof for rank-1 measurements}
In this subsection, we use the general result \Cref{thm:ms_gen} to prove \Cref{thm:ms} for rank-1 random measurements. First, we need a technical lemma:
\begin{lemma}[{\cite[Thm.~2.1]{Hsu2012a}}]
	\label{lem:hw_ub}
	Let $X \in \R^D$ be a zero-mean $K$--sub-Gaussian random vector.
	For any fixed $D \times D$ positive semidefinite matrix $\Sigma$, we have, with probability at least $1 - e^{-t}$,
	\[
		\ip{\Sigma X}{X} \leq C K^2 \parens*{ \tr \Sigma + \sqrt{\tr(\Sigma^2) t} + \opnorm{\Sigma} t }.
	\]
\end{lemma}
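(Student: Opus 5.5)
The plan is the standard Gaussian-decoupling and Chernoff argument of \cite{Hsu2012a}, which I would reproduce rather than assume. Write $Y \coloneqq \Sigma^{1/2} X$, so that $\ip{\Sigma X}{X} = \norm{Y}^2$. The goal is an upper bound on the moment generating function $\E \exp(\eta \norm{Y}^2)$ for all $0 \le \eta \le c/(K^2 \opnorm{\Sigma})$. After that, Markov's inequality and an optimization over $\eta$ give the tail bound.

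\textbf{Step 1: a sub-Gaussian MGF bound.} First I convert the $\psi_2$-type definition into an MGF bound. If $X$ is zero-mean and $\E e^{\abs{\ip{X}{h}}^2/K^2} \le 2$ for unit $h$, then a standard argument gives
\[
	\E e^{\ip{X}{h}} \le e^{c_0 K^2 \norm{h}^2}
\]
for all $h$, with $c_0$ universal. The argument is a Taylor expansion of the exponential, using that the linear term vanishes by zero mean, and bounding the higher moments through the $\psi_2$ condition.

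\textbf{Step 2: Gaussian decoupling.} Next I linearize the quadratic form. Let $g \sim N(0, I_D)$ be independent of $X$. For $s > 0$ we have the identity
\[
	\E_g e^{s\ip{g}{Y}} = e^{s^2 \norm{Y}^2/2}.
\]
Taking $\E_X$ and using Fubini, then applying Step 1 conditionally on $g$ with $h = s\Sigma^{1/2} g$, gives
\[
	\E e^{s^2 \norm{Y}^2/2} = \E_g \E_X e^{\ip{s\Sigma^{1/2} g}{X}} \le \E_g e^{c_0 K^2 s^2 \ip{\Sigma g}{g}}.
\]
Since $g$ is Gaussian and $\Sigma$ is diagonalizable with eigenvalues $\lambda_i \ge 0$, the right-hand side equals
\[
	\prod_i (1 - 2 c_0 K^2 s^2 \lambda_i)^{-1/2}
\]
whenever $2c_0 K^2 s^2 \opnorm{\Sigma} < 1$.

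\textbf{Step 3: Chernoff bound.} Set $\eta = s^2/2$ and restrict to $\eta \le 1/(8 c_0 K^2 \opnorm{\Sigma})$. Then each factor $x_i \coloneqq 4c_0K^2\eta\lambda_i$ lies in $[0, 1/2]$, and the elementary bound $-\tfrac12\log(1-x) \le \tfrac{x}{2} + \tfrac{x^2}{2}$ on $[0,1/2]$ gives
\[
	\log \E e^{\eta \norm{Y}^2} \le C K^2 \eta \tr\Sigma + C K^4 \eta^2 \tr(\Sigma^2).
\]
Markov's inequality then yields
\[
	\Pr\bigl(\norm{Y}^2 > CK^2\tr\Sigma + u\bigr) \le \exp\bigl(-\eta u + CK^4\eta^2\tr(\Sigma^2)\bigr).
\]
Choosing
\[
	\eta = \min\Bigl\{ \frac{u}{2CK^4\tr(\Sigma^2)},\ \frac{1}{8c_0K^2\opnorm{\Sigma}} \Bigr\}
\]
gives the Bernstein-type tail
\[
	\exp\Bigl(-c\min\Bigl\{\frac{u^2}{K^4\tr(\Sigma^2)}, \frac{u}{K^2\opnorm{\Sigma}}\Bigr\}\Bigr).
\]
Setting
\[
	u = CK^2\bigl(\sqrt{\tr(\Sigma^2)\,t} + \opnorm{\Sigma}\,t\bigr)
\]
makes this at most $e^{-t}$, which is the claim.

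\textbf{Main obstacle.} The only delicate point is Step 1, getting an MGF bound with $K^2$ rather than a dimension-dependent constant from the one-dimensional $\psi_2$ assumption. I would handle it in the standard way. For $\abs{\lambda} \le 1/K$, expand $e^{x} \le 1 + x + x^2 e^{\abs{x}}$ and then use $x^2e^{\abs x} \le C K^2 e^{x^2/K^2}$ up to rescaling. For large $\abs{\lambda}$, use the inequality $\lambda x \le \lambda^2 K^2/2 + x^2/(2K^2)$. Everything else is routine Gaussian computation.
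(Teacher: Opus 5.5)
Your proposal is correct. It is essentially the argument behind \cite[Thm.~2.1]{Hsu2012a}, which the paper cites without proof: Gaussian decoupling of $\norm{\Sigma^{1/2}X}^2$, the exact chi-square moment generating function, and a Chernoff bound (with cruder constants here), where your Step~1 makes explicit the conversion from the paper's $\psi_2$-type definition to Hsu et al.'s moment-generating-function definition that the paper only asserts is equivalent up to a constant.
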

In particular, taking $\Sigma = I_D$ (see also \cite{Liu2025}), we obtain, with probability at least $1 - e^{-t}$,
\[
	\norm{X}^2 \leq C K^2 (D + t).
\]
The definition of sub-Gaussianity in \cite{Hsu2012a} is slightly different from ours but is equivalent up to a constant.

\begin{proof}[Proof of \Cref{thm:ms}]
	We will naturally seek to apply \Cref{thm:ms_gen} with $A = a b\transpose$.
	Therefore, we must establish the conditions of \Cref{thm:ms_gen} for such a choice of $A$.
	\begin{enumerate}
		\item[\ref{assump:isotropic}.] (Isotropy) The isotropy of $A = a b\transpose$ follows easily from the fact that $a$ and $b$ are isotropic random vectors independent of each other.
		\item[\ref{assump:moment}.] (Moment condition) Fix $H \in \Mspace$.
		As $a$ is $K_a$--sub-Gaussian and independent of $b$, note that, for any integer $m \geq 1$, conditioned on $b$, we have
		\[
			\E_{a} \brackets*{ 1 + \frac{\abs{\ip{a}{Hb}}^{2m}}{ m! K_a^{2m} \norm{Hb}^{2m}} }
			\leq \E_{a} \brackets*{ \exp\parens*{ \frac{\abs{\ip{a}{Hb}}^2}{K_a^2 \norm{Hb}^2} } }\leq 2,
		\]
		which implies
		\begin{equation}
			\label{eq:a_exp}
			\E_{a} \abs{\ip{a b\transpose}{H}}^{2m} \leq m! K_a^{2m} \norm{Hb}^{2m}.
		\end{equation}
		We then apply \Cref{lem:hw_ub} with $\Sigma = H\transpose H$, noting that
		\[
			\opnorm{\Sigma} \leq \sqrt{\tr(\Sigma^2)} \leq \tr \Sigma = \normF{H}^2,
		\]
		to obtain, for any $t \geq 0$, with probability at least $1 - e^{-t}$,
		\[
			\norm{Hb}^2 \leq C K_b^2(1 + t) \normF{H}^2.
		\]
		This further implies (e.g., by standard tail integration arguments) that, for any $m \geq 1$,
		\[
			\E \norm{Hb}^{2m} \leq C m! K_b^{2m} \normF{H}^{2m}.
		\]
		Combining this with \eqref{eq:a_exp}, we obtain
		\[
			( \E \abs{\ip{a b\transpose}{H}}^{2m} )^{1/2m}
			\leq C (m!)^{1/m} K_a K_b \normF{H}.
		\]
		We can then choose, for example, $m = 4$ to obtain condition~\ref{assump:moment} of \Cref{thm:ms_gen} with $q \coloneqq 8$ and $B \coloneqq C K_a K_b$.
		
		\item[\ref{assump:bdd}.] (Bounded on $\Tst$)
		\newcommand{\QUst}{Q_{\Ust}}
		\newcommand{\QVst}{Q_{\Vst}}
		Let $\QUst \in \Lstspace$, $\QVst \in \Rstspace$ be matrices with orthonormal columns such that $\range(\QUst) = \range(\Ust)$ and $\range(\QUst) = \range(\Vst)$.
		The orthogonal projection onto $\Tst$ is given by the well-known formula
		\begin{align*}
			\PTst(H)
			=  \QUst \QUst\transpose H \parens*{I_{d_2} - \frac{1}{2} \QVst \QVst\transpose } + \parens*{I_{d_1} - \frac{1}{2}\QUst \QUst\transpose} H \QVst \QVst\transpose.
		\end{align*}
		We then have
		\begin{align*}
			\normF{\PTst(A_i)}
			&= \normF{\PTst(a_i b_i\transpose)} \\
			&\leq \norm{\QUst\transpose a_i} \norm{b_i} + \norm{a_i}\norm{\QVst\transpose b_i}.
		\end{align*}
		Noting that $\QUst\transpose a_i$ and $\QVst\transpose b_i$ are, respectively, $K_a$-- and $K_b$--sub-Gaussian random vectors in $\R^{\rgt}$,
		we can apply \Cref{lem:hw_ub} (in particular, the subsequent norm bound) to each norm in the above expression and for each $i$ to obtain, for any $t \geq 0$, with probability at least $1 - 4n e^{-t}$,
		\[
			\max_i~\normF{\PTst(A_i)} \leq C K_a K_b \sqrt{(\rgt + t)(d_1 + d_2 + t)}.
		\]
		Taking $t = \log 4 n(d_1 + d_2) \leq C \log n$ (recalling that $n \geq C(d_1 + d_2)$),
		we obtain condition~\ref{assump:bdd} of \Cref{thm:ms_gen} with
		\[
			\tau \coloneqq C K_a K_b \sqrt{ \parens*{1 + \frac{\log n}{\rgt}} \parens*{1 + \frac{\log n}{d_1 + d_2}} }.
		\]
	\end{enumerate}
	With this, we see that the condition for $n$ in \Cref{thm:ms_gen} is satisfied when (noting that $K_a, K_b \geq 1$ due to isotropy)
	\[
		n \geq C K_a^4 K_b^4 \ropt(d_1 + d_2) \log (d_1 + d_2),
	\]
	as we have assumed.
	The result follows from \Cref{thm:ms_gen}.
\end{proof}

\subsection{Proof of general result}
\label{sec:proofs_ms_gen}
In this section, we prove \Cref{thm:ms_gen} from the deterministic \Cref{thm:determ}.
It will follow easily from the following two technical lemmas whose proofs we defer to later in the section.
We only explicitly consider the asymmetric case, but everything is also valid in the symmetric case with $d_1 = d_2 = d$ and $\Mspace$ replaced by $\symspace$.
\begin{lemma}
	\label{lem:ms_lb}
	Under conditions \ref{assump:isotropic} and \ref{assump:moment} of \Cref{thm:ms_gen},
	for $\delta > 0$ and integer $k \geq 1$, if
	\[
		n \geq C \frac{ B^4 k (d_1 + d_2)}{\delta^2} \log \frac{d_1 + d_2}{\delta},
	\]
	then, with probability at least $1 - \frac{1}{d_1 + d_2}$,
	for all $H \in \Mspace$ of rank at most $k$,
	\[
		\frac{1}{n} \norm{\scrA(H)}^2 \geq (1 - \delta) \normF{H}^2.
	\]
\end{lemma}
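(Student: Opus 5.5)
We will prove the lower isometry bound of \Cref{lem:ms_lb} using a small-ball (Mendelson) argument, or equivalently a truncation argument for nonnegative empirical processes. The point is that a lower bound on $\frac{1}{n}\norm{\scrA(H)}^2$ needs only weak moment control: the $q > 4$ moment bound of condition~\ref{assump:moment} is enough, whereas an upper bound would need much more. By homogeneity it suffices to consider the set $\scrS_k \coloneqq \{H : \rank(H) \leq k,\ \normF{H} = 1\}$. We need to show that $\inf_{H \in \scrS_k} \frac{1}{n}\sum_i \ip{A_i}{H}^2 \geq 1 - \delta$ with probability at least $1 - \frac{1}{d_1+d_2}$.

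\textbf{Truncation.} Fix a level $T > 0$, to be chosen of order $B^2/\sqrt{\delta}$ up to $q$-dependent factors, and use the pointwise bound $x^2 \geq \min(x^2, T^2)$. For fixed $H \in \scrS_k$, isotropy gives $\E\ip{A}{H}^2 = 1$. By H\"older and Markov with the $q$-th moment,
\[
	\E[\ip{A}{H}^2 \mathbf{1}\{\abs{\ip{A}{H}} > T\}] \leq B^q T^{2-q}.
\]
Since $q > 4$, this is at most $\delta/3$ once $T \geq (3B^q/\delta)^{1/(q-2)}$. So $\E \min(\ip{A}{H}^2, T^2) \geq 1 - \delta/3$. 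The truncated variables lie in $[0, T^2]$ and have variance at most $\E \ip{A}{H}^4 \leq B^4$. Bernstein's inequality therefore gives, for fixed $H$,
\[
	\frac{1}{n}\sum_i \min(\ip{A_i}{H}^2, T^2) \geq 1 - \tfrac{2\delta}{3}
\]
with probability at least $1 - \exp(-c\, n \delta^2 / (B^4 + T^2 \delta))$. With $T^2\delta \lesssim B^4$ (after adjusting constants depending on $q$), the exponent is of order $n\delta^2/B^4$.

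\textbf{Uniformity.} Take an $\epsilon$-net $\scrN$ of $\scrS_k$ in Frobenius norm. The standard bound is $\abs{\scrN} \leq (9/\epsilon)^{k(d_1+d_2+1)}$. A union bound over $\scrN$ costs a factor $\exp(C k (d_1+d_2)\log(1/\epsilon))$, which the assumed sample size absorbs if $\epsilon$ is polynomial in $\delta/(d_1+d_2)$. This is where the $\log\frac{d_1+d_2}{\delta}$ factor comes from.

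\textbf{Main obstacle: passing from the net to all of $\scrS_k$.} The map $H \mapsto \min(\ip{A}{H}^2, T^2)$ is not globally Lipschitz with a dimension-free constant. I would handle this with the Lipschitz truncation $\phi_T(x) = \min(x^2, T^2)$, for which $\abs{\phi_T(x) - \phi_T(y)} \leq 2T\abs{x-y}$. Given $H \in \scrS_k$, choose $H_0 \in \scrN$ with $\normF{H - H_0} \leq \epsilon$; then $H - H_0$ has rank at most $2k$. This gives
\[
	\frac{1}{n}\sum_i \abs{\phi_T(\ip{A_i}{H}) - \phi_T(\ip{A_i}{H_0})} \leq 2T \parens*{\frac{1}{n}\sum_i \ip{A_i}{H-H_0}^2}^{1/2},
\]
so we need a crude upper bound on $\frac{1}{n}\norm{\scrA(G)}^2$ over rank-$2k$ matrices $G$. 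I would use $\sup_{\normF{G}\leq 1}\frac1n\norm{\scrA(G)}^2 \leq \frac{1}{n}\sum_i \normF{A_i}^2$, whose expectation is $d_1 d_2$ by isotropy. Markov's inequality then makes it at most $d_1 d_2 (d_1+d_2)$, say, with probability at least $1 - \frac{1}{2(d_1+d_2)}$. Choosing $\epsilon \approx \delta / (T (d_1+d_2)^{3/2})$ makes the perturbation at most $\delta/3$, and $\log(1/\epsilon)$ is still $O(\log\frac{d_1+d_2}{\delta})$ after absorbing the $B$-dependence into $C$ (or into a $\log B$ factor). Combining the three steps gives $\frac1n\norm{\scrA(H)}^2 \geq 1-\delta$ on $\scrS_k$. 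The delicate point is that this crude polynomial-in-dimension bound costs only a logarithm; if the failure probability does not work out, I would instead use a symmetrization and contraction bound for the truncated process, controlling $\E\opnorm{\frac{1}{n}\sum_i \varepsilon_i A_i}$ via its action on rank-$k$ matrices.
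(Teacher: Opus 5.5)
Your proposal follows the paper's skeleton: a fixed-$H$ lower-tail bound with variance proxy $B^4$, a union bound over an $\epsilon$-net $\scrN$ of $\scrS_k$ with $\log\abs{\scrN} \le k(d_1+d_2+1)\log(9/\epsilon)$, and a Markov bound on $\frac1n\sum_i\normF{A_i}^2$ (mean $d_1d_2$ by isotropy) to pass from the net to all of $\scrS_k$. The paper does not truncate. \Cref{lem:nn_lower_tail} controls the lower tail of a sum of nonnegative variables using only $\E X_i^2 \le B^4$. The paper then does the net-to-all step on $\norm{\scrA(H)}$, which is $\opnorm{\scrA}$-Lipschitz, so $\epsilon = \delta/(4\sqrt{2d_1d_2(d_1+d_2)})$ does not depend on $B$.

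Your version has one concrete defect. You perturb the truncated squares, so the Lipschitz constant $2T$ enters the net radius $\epsilon \approx \delta/(T(d_1+d_2)^{3/2})$. Since $T \gtrsim B^2/\sqrt{\delta}$ grows with $B$, $\log(1/\epsilon)$ contains a $\log B$ term. What you actually prove therefore needs $n \gtrsim \frac{B^4 k(d_1+d_2)}{\delta^2}\bigl(\log\frac{d_1+d_2}{\delta} + \log B\bigr)$, which is not the stated bound. The constant $C$ must be universal, and $B$ is not controlled by $d_1+d_2$: an isotropic $A$ can have arbitrarily large $q$-th moments. So the $\log B$ cannot be ``absorbed into $C$.''

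The fix is easy. Since $x \mapsto \min(\abs{x}, T)$ is $1$-Lipschitz, run the perturbation step on $H \mapsto \bigl(\frac1n\sum_i \min(\abs{\ip{A_i}{H}}, T)^2\bigr)^{1/2}$. Its Lipschitz constant is $\opnorm{\scrA}/\sqrt{n}$ regardless of $T$, so you can take $\epsilon$ as in the paper. Alternatively, drop truncation altogether. One-sided Bernstein applied to $\E X_i - X_i \le \E X_i = 1$ (using $X_i \ge 0$), or \Cref{lem:nn_lower_tail} directly, already gives exponent $c\,n\delta^2/B^4$ without any upper bound on $X_i$.

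Two minor points. Markov at threshold $d_1d_2(d_1+d_2)$ only gives failure probability $1/(d_1+d_2)$, so double the threshold to get $1/(2(d_1+d_2))$. The symmetrization fallback is unnecessary, since the two failure probabilities already sum to $1/(d_1+d_2)$.
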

The proof is in \Cref{sec:proofs_lb} below.

\begin{lemma}
	\label{lem:ms_tansp_ub}
	Under the assumptions of \Cref{thm:ms_gen},
	for any $\delta > 0$, if
	\[
		n \geq C_q \parens*{ \frac{\tau^2}{\delta} + \frac{B^4}{\delta^2} } \rgt (d_1 + d_2),
	\]
	where $C_q > 0$ only depends on $q$,
	then, with probability at least $1 - \frac{C}{d_1 + d_2}$,
	for all $H \in \Tst$,
	\[
		\frac{1}{n} \norm{\scrA(H)}^2 \leq (1 + \delta) \normF{H}^2.
	\]
\end{lemma}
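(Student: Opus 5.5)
The statement is a uniform upper bound on a random quadratic form over a fixed linear subspace, so I would reduce it to a matrix concentration problem. Let $D \coloneqq \dim \Tst \le \rgt(d_1+d_2)$. Fix an orthonormal basis of $\Tst$ and identify $\Tst$ with $\R^D$. Then
\[
	\frac{1}{n}\norm{\scrA(H)}^2 = \ip{\Sigma_n h}{h}, \qquad \Sigma_n \coloneqq \frac{1}{n}\sum_{i=1}^n x_i x_i\transpose,
\]
where $x_i \in \R^D$ is the coordinate vector of $\PTst(A_i)$ and $h$ is the coordinate vector of $H \in \Tst$. Isotropy (condition~\ref{assump:isotropic}) gives $\E x_i x_i\transpose = I_D$. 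The claim is therefore exactly that $\lambda_{\max}(\Sigma_n) \le 1+\delta$, a one-sided covariance estimation bound for the isotropic vectors $x_i$ in dimension $D \le \rgt(d_1+d_2)$.

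To control the largest eigenvalue I would use truncation followed by matrix Bernstein. Let $\tilde x_i \coloneqq x_i \mathbf{1}\{\norm{x_i} \le \tau\sqrt{\rgt(d_1+d_2)}\}$. By condition~\ref{assump:bdd}, on an event of probability at least $1-\frac{1}{d_1+d_2}$ we have $x_i = \tilde x_i$ for every $i$, so it suffices to bound $\frac1n\sum_i \tilde x_i\tilde x_i\transpose$. The truncated summands are PSD with $\opnorm{\tilde x_i\tilde x_i\transpose} \le R \coloneqq \tau^2\rgt(d_1+d_2)$, and $\E\tilde x_i\tilde x_i\transpose \preceq I_D$. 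The matrix Chernoff upper tail (equivalently Bernstein for PSD summands with mean at most $I$) gives $\lambda_{\max} \le 1+\delta$ with failure probability $D\exp(-c\, n\min(\delta,\delta^2)/R)$.

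This alone only yields $n \gtrsim \tau^2\rgt(d_1+d_2)\delta^{-2}\log D$, which carries an extra logarithm and the wrong power of $\delta$ on the $\tau^2$ term. To match the stated $\tau^2/\delta + B^4/\delta^2$ scaling, I would instead invoke a sharper one-sided tool, as the paper hints with its \Cref{lem:conc_tikh}. My first choice is a bound in the style of Tikhomirov or Mendelson--Zhivotovskiy for heavy-tailed covariance estimation. Such a result controls $\sup_{\norm{h}=1}\frac1n\sum_i(\ip{x_i}{h}^2-1)$ using two inputs. The first is the directional $L_q$ bound $(\E\abs{\ip{x}{h}}^q)^{1/q}\le B$, with $q>4$; this comes from condition~\ref{assump:moment} applied to $H\in\Tst$, since $\ip{x}{h}=\ip{A}{H}$. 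The second is the almost-sure norm bound $\norm{x_i}^2\le \tau^2\rgt(d_1+d_2)$ after truncation. The output is a deviation of order
\[
	\frac{\tau^2 D'}{n} + B^2\sqrt{\frac{D'}{n}}, \qquad D' = \rgt(d_1+d_2),
\]
which is at most $\delta$ under the stated sample size, with failure probability $C/(d_1+d_2)$.

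I expect the main obstacle to be the effect of truncation on the directional moments and the mean. Truncation changes $\E\tilde x\tilde x\transpose$ only by a PSD-decreasing amount, which is harmless for an upper bound. The $L_q$ bound is also inherited, since $\abs{\ip{\tilde x}{h}}\le\abs{\ip{x}{h}}$. So the remaining work is to check the hypotheses of the chosen concentration lemma and to sum the two failure probabilities.
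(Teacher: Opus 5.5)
Your argument is essentially the paper's proof. You identify $\Tst$ with $\R^D$ and apply \Cref{lem:conc_tikh} to $X=\PTst(A)$ with the same $q$ and $B$; isotropy and the moment bound transfer because $\ip{\PTst(A)}{H}=\ip{A}{H}$ for $H\in\Tst$. You then use condition~3 to control the $\max_i\norm{X_i}^2/n$ term, which gives a deviation of order $\tau^2\rgt(d_1+d_2)/n + B^2\sqrt{\rgt(d_1+d_2)/n}\le\delta$.

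The truncation step should be dropped. It is unnecessary, and as written it slightly breaks the argument.

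\begin{itemize}
\item \textbf{Truncation is unnecessary.} The bound in \Cref{lem:conc_tikh} is stated in terms of the realized random quantity $\max_i\norm{X_i}^2$, so no almost-sure norm bound is needed. Apply it to the untruncated $x_i$, which are exactly zero-mean and isotropic. Then intersect with the probability-$\bigl(1-\frac{1}{d_1+d_2}\bigr)$ event of condition~3, on which $\max_i\norm{x_i}^2\le\tau^2\rgt(d_1+d_2)$.
\item \textbf{Truncation breaks the lemma's hypotheses.} The truncated vectors $\tilde x_i$ are in general neither zero-mean nor exactly isotropic. Noting that their second-moment matrix is smaller in the PSD order does not by itself let you invoke a result stated for isotropic vectors. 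You would need a general-covariance version or a whitening argument, which the direct route avoids.
\item \textbf{Failure probability.} To turn the lemma's $1-\frac1D$ into $1-\frac{C}{d_1+d_2}$, you need $D\ge c(d_1+d_2)$. This holds because $\dim\Tst=\rgt(d_1+d_2-\rgt)\ge\rgt(d_1+d_2)/2$ in the asymmetric case, and similarly in the symmetric case.
\end{itemize}
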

The proof is in \Cref{sec:proofs_tansp_ub} below.

\begin{proof}[Proof of \Cref{thm:ms_gen}]
	Fix, independently of all other parameters, $\delta > 0$ such that
	\[
		\frac{1 + \delta}{1 - \delta} < \frac{6}{\sqrt{5} + 2} \quad \Longleftrightarrow \quad \delta < \frac{4 - \sqrt{5}}{8 + \sqrt{5}} \approx 0.17.
	\]
	The assumption on $n$ in the theorem statement allows us to apply \Cref{lem:ms_lb,lem:ms_tansp_ub} with the given (constant) $\delta$ and $k = \ropt + \rgt$.
	Then, with probability at least $1 - \frac{C}{d_1 + d_2}$,
	the assumptions of \Cref{thm:determ} are satisfied with
	\[
		\alpha \coloneqq n(1-\delta) \quad \text{and} \quad \beta \coloneqq n(1 + \delta). 
	\]
	The result follows immediately from the conclusion of \Cref{thm:determ} with some simplification and consolidation of terms.
\end{proof}

\subsubsection{Restricted lower isometry}
\label{sec:proofs_lb}
In this subsection, we prove the lower concentration bound \Cref{lem:ms_lb}.
The proof is inspired by that of \cite[Lem.~22]{Sun2018}.

The following known concentration result on sums of on nonnegative random variables is key:
\begin{lemma}{(E.g., \cite[Thm.~7.1]{Tropp2026})}
	\label{lem:nn_lower_tail}
	Let $X_1, \dots, X_n$ be independent and nonnegative real random variables, and let
	\[
		Y \coloneqq \sum_{i=1}^n X_i, \qquad v \coloneqq \sum_{i=1}^n \E X_i^2.
	\]
	Then, for all $t \geq 0$, we have, with probability at least $1 - e^{-t}$,
	\[
		Y \geq \E Y - \sqrt{2 v t}.
	\]
\end{lemma}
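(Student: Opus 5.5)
The plan is the standard Chernoff (exponential moment) argument applied to the \emph{lower} tail. Nonnegativity is what makes this work with only second moments: for $x \geq 0$ we have the elementary inequality $e^{-x} \leq 1 - x + \frac{x^2}{2}$. This holds because the function $g(x) = 1 - x + \frac{x^2}{2} - e^{-x}$ satisfies $g(0) = 0$, $g'(0) = 0$, and $g''(x) = 1 - e^{-x} \geq 0$ on $[0,\infty)$.

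First dispose of degenerate cases. If $v = \infty$, the bound $Y \geq \E Y - \infty$ is vacuous. If $v = 0$, every $X_i = 0$ almost surely, so $Y = \E Y = 0$. Otherwise $0 < v < \infty$, so each $\E X_i^2$ is finite, hence each $\E X_i$ is finite, and $\E Y$ is finite.

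Next, fix $\theta > 0$. Apply the elementary inequality with $x = \theta X_i \geq 0$ and take expectations:
\[
\E e^{-\theta X_i} \leq 1 - \theta \E X_i + \frac{\theta^2}{2} \E X_i^2 \leq \exp\parens*{ -\theta \E X_i + \frac{\theta^2}{2}\E X_i^2 },
\]
using $1 + u \leq e^u$. By independence the moment generating function factorizes, which gives
\[
\E e^{-\theta (Y - \E Y)} \leq e^{\theta^2 v/2}.
\]

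Then apply Markov's inequality to $e^{-\theta(Y - \E Y)}$. For $s > 0$,
\[
\Pr(Y \leq \E Y - s) \leq \exp\parens*{ -\theta s + \frac{\theta^2 v}{2} }.
\]
Optimizing at $\theta = s/v$ yields the bound $e^{-s^2/(2v)}$. Choosing $s = \sqrt{2vt}$ makes this $e^{-t}$, which is the claim; for $t = 0$ the claim is trivial.

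There is no real obstacle here. The only point needing care is the one-sided Taylor bound. It is valid only because the exponent $-\theta X_i$ is nonpositive, which is exactly where nonnegativity enters and why no upper-tail (boundedness) assumption is needed.
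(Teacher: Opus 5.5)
Your proof is correct and complete. The one-sided bound $e^{-x} \leq 1 - x + x^2/2$ for $x \geq 0$, factorization of $\E e^{-\theta Y}$ by independence, and Markov's inequality with $\theta = s/v$ give exactly $\Pr(Y \leq \E Y - \sqrt{2vt}) \leq e^{-t}$. The paper does not prove this lemma but only cites it from Tropp's notes; your argument is the standard Chernoff-type proof of this fact and works as a self-contained replacement for the citation.
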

\begin{proof}[Proof of \Cref{lem:ms_lb}]
	\newcommand{\unitmats}{\scrS_{k}}
	\newcommand{\Neps}{\scrN_{\epsilon}}
	\newcommand{\neps}{n_{\epsilon}}
	We can, without loss of generality, restrict ourselves to matrices $H$ with $\normF{H} = 1$.
	For brevity, denote
	\[
		\unitmats \coloneqq \{ H \in \Mspace : \normF{H} = 1,\ \rank(H) \leq k \}.
	\]
	For fixed $H \in \unitmats$,
	we apply \Cref{lem:nn_lower_tail} with $X_i \coloneqq \abs{\ip{A_i}{H}}^2$.
	By isotropy, $\E X_i = \normF{H}^2 = 1$,
	and, by the moment assumption and Jensen's inequality,
	\[
		\E X_i^2 = \E \abs{\ip{A_i}{H}}^4 \leq (\E \abs{\ip{A_i}{H}}^q)^{4/q} \leq B^4.
	\]
	We thus have, in \Cref{lem:nn_lower_tail}, $v \leq B^4 n$,
	so, for all $t \geq 0$, with probability at least $1 - e^{-t}$,
	\[
		\frac{1}{n} \norm{\scrA(H)}^2 = \frac{1}{n} \sum_{i=1}^n X_i \geq 1 - C B^2 \sqrt{\frac{t}{n}}.
	\]
	This bound is for \emph{fixed} $H \in \unitmats$;
	to extend it to all $H \in \unitmats$, let $\Neps$ be an $\epsilon$-covering (in Frobenius norm) of $\unitmats$ for some $\epsilon > 0$ that we will specify later.
	By a union bound, we have, for all $t \geq 0$ (which we shift by $\log \abs{\Neps}$), with probability at least $1 - e^{-t}$,
	\[
		\frac{1}{n} \norm{\scrA(H)}^2 \geq 1 - C B^2 \sqrt{\frac{t + \log \abs{\Neps}}{n}} \quad \text{for all} \quad H \in \Neps.
	\]
	Next, for each $H \in \unitmats$,
	let $\neps(H) \in \Neps$ be a point satisfying $\normF{H - \neps(H)} \leq \epsilon$.
	We then have, for all $H \in \unitmats$,
	\begin{align*}
		\frac{1}{\sqrt{n}}\norm{\scrA(H)}
		&\geq \frac{1}{\sqrt{n}}\norm{\scrA(\neps(H))} - \frac{1}{\sqrt{n}}\norm{\scrA(H - \neps(H))} \\
		&\geq 1 - C B^2 \sqrt{\frac{t + \log \abs{\Neps}}{n}} - \frac{\opnorm{\scrA}}{\sqrt{n}} \epsilon.
	\end{align*}
	We will take $t = \log(2(d_1 + d_2))$ such that this holds with probability at least $1 - \frac{1}{2(d_1 + d_2)}$.
	We will see that $t$ is dominated by our bound on $\log \abs{\Neps}$,
	so we can drop $t$ modulo a constant.
	
	We can crudely upper bound $\opnorm{\scrA}$ by $\normF{\scrA}$.
	We have
	\[
		\E \normF{\scrA}^2 = \sum_{i=1}^n \E \normF{A_i}^2 = n d_1 d_2
	\]
	due to the isotropy of $A$.
	Then, by Markov's inequality, with probability at least $1 - \frac{1}{2(d_1 + d_2)}$, $\opnorm{\scrA} \leq \sqrt{2 n d_1 d_2 (d_1 + d_2)}$.
	Choosing $\epsilon = \frac{\delta}{4\sqrt{2d_1 d_2 (d_1 + d_2)}}$, we then have $\frac{\opnorm{\scrA}}{\sqrt{n}} \epsilon \leq \frac{\delta}{4}$.
	
	A minimal $\epsilon$-covering $\Neps$ of $\unitmats$ satisfies \cite[Lem.~3.1]{Candes2011b}
	\[
		\log \abs{\Neps} \leq k(d_1 + d_2 + 1) \log \frac{9}{\epsilon}
		\leq C k(d_1 + d_2) \log \frac{d_1 + d_2}{\delta}.
	\]
	If
	\[
		n \geq \frac{C B^4 k(d_1 + d_2)}{\delta^2} \log \frac{d_1 + d_2}{\delta}
	\]
	for sufficiently large $C$, then
	\[
		\frac{1}{\sqrt{n}}\norm{\scrA(H)} \geq 1 - \frac{\delta}{2} \quad \Longrightarrow \quad \frac{1}{n} \norm{\scrA(H)}^2 \geq 1 - \delta.
	\]
	Together with a union bound for the probability, this completes the proof.
\end{proof}

\subsubsection{Tangent space upper isometry}
\label{sec:proofs_tansp_ub}
In this subsection, we prove \Cref{lem:ms_tansp_ub}, the upper concentration bound of $\scrA$ restricted to $\Tst$.
It will follow easily from the following key probability result:
\begin{lemma}[{\cite[Cor.~1.2]{Tikhomirov2018}}]
	\label{lem:conc_tikh}
	Let $D$ and $n \geq 2 D$ be positive integers.
	Let $X_1, \dots, X_n$ be independent copies of a zero-mean isotropic random vector $X$ in $\R^D$ that satisfies the following:
	for some $q > 4$ and $B \geq 1$, for all unit-norm $h \in \R^D$,
	\[
		(\E \abs{\ip{X}{h}}^q)^{1/q} \leq B.
	\]
	Then, with probability at least $1 - \frac{1}{D}$,
	\[
		\opnorm*{ \frac{1}{n} \sum_{i=1}^n X_i X_i\transpose - I_D} \leq C_q \parens*{\frac{\max_i~\norm{X_i}^2}{n} + B^2 \sqrt{\frac{D}{n}}},
	\]
	where $C_q > 0$ is a constant only depending on $q$.
\end{lemma}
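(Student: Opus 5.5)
This lemma is quoted from \cite[Cor.~1.2]{Tikhomirov2018}, and in the paper we would simply invoke it. If we had to reprove it, the plan is to bound the supremum
\[
	\sup_{\norm{h}=1} \abs*{ \frac{1}{n} \sum_{i=1}^n \ip{X_i}{h}^2 - 1 }
\]
separately from below and from above.

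\textbf{Lower tail.} This side is the easy one and follows the argument of \Cref{lem:ms_lb}. The summands $\ip{X_i}{h}^2$ are nonnegative, have mean $1$, and have second moment at most $B^4$. So \Cref{lem:nn_lower_tail} gives a deviation of order $B^2\sqrt{t/n}$ for each fixed $h$. We then take a union bound over a $\tfrac14$-net of the sphere, which has $\log$-cardinality of order $D$. Finally we pass from the net to the whole sphere by the usual quadratic-form approximation, $\opnorm{S} \le 2\sup_{h \in \scrN}\abs{h\transpose S h}$ for symmetric $S$. This yields the term $B^2\sqrt{D/n}$ with no logarithm.

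\textbf{Upper tail, splitting into two regimes.} This is where $\max_i \norm{X_i}^2$ enters. We fix a threshold $\theta$ of order $B$ times a power of $n/D$, and for each unit $h$ split
\[
	\ip{X_i}{h}^2 = \ip{X_i}{h}^2 \mathbf 1\{\abs{\ip{X_i}{h}} \le \theta\} + \ip{X_i}{h}^2 \mathbf 1\{\abs{\ip{X_i}{h}} > \theta\}.
\]
The truncated part is bounded, so Bernstein's inequality plus a net argument controls it at level $B^2\sqrt{D/n}$. The truncation bias is small because $q>4$ gives $\E\,\ip{X}{h}^2 \mathbf 1\{\abs{\ip{X}{h}}>\theta\} \le B^q\theta^{2-q}$. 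The large part is then the quantity $\sup_h \sum_{i \in I(h)} \ip{X_i}{h}^2$ over the random index set $I(h)$ of large inner products. The $q$-th moment bound and a union bound over nets at dyadic scales should show that, simultaneously for all $h$, the number of indices with $\abs{\ip{X_i}{h}} > 2^j\theta$ is at most of order $D\,(2^j\theta)^{-q}B^q n$ plus a term proportional to $D$.

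\textbf{Main obstacle.} The hard step is converting this counting information into a bound on $\sup_h \sum_{i\in I}\ip{X_i}{h}^2$ over all index sets $I$ of size $m$, that is, on the operator norm of the $m\times D$ submatrix with rows $X_i$, $i \in I$. For this I would first try the decomposition $\sum_{i\in I} X_iX_i\transpose$ into its diagonal part, bounded by $\abs{I}\max_i\norm{X_i}^2$, and its off-diagonal part $\sum_{i\ne j\in I}\ip{X_i}{h}\ip{X_j}{h}$. The diagonal part is where the $\max_i\norm{X_i}^2/n$ term originates. For the off-diagonal part I would use decoupling, then apply the counting bound once more conditionally on one half of the sample, organizing the estimate over dyadic levels of $\abs{\ip{X_i}{h}}$. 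The requirement $q>4$ is exactly what makes the resulting geometric series over levels summable without a $\log D$ loss. This level-by-level bookkeeping is the delicate heart of Tikhomirov's argument, and I expect it to be where the constant $C_q$ (which blows up as $q\downarrow 4$) arises. If the sharp version proves too intricate, a fallback is matrix Bernstein after truncation at $\norm{X_i}^2\le\max_i\norm{X_i}^2$, which gives the same statement up to a $\log D$ factor. That weaker version would already suffice for our purposes, since \Cref{thm:ms_gen} already carries a $\log(d_1+d_2)$ factor in the sample size.
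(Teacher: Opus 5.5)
Citing \cite[Cor.~1.2]{Tikhomirov2018} is exactly what the paper does; it gives no proof of its own, so your first sentence is all the justification needed. The reproof you sketch is not a proof, though. The hard step, a log-free bound on $\sup_h \sum_{i\in I(h)}\ip{X_i}{h}^2$ uniformly in $h$, is explicitly deferred. Two steps you present as routine also fail as written:
\begin{itemize}
\item \emph{Lower tail.} The net bound $\opnorm{S}\le 2\sup_{h\in\mathcal N}\abs{h^\top S h}$ needs two-sided control on $\mathcal N$. With only $h^\top S h\ge -\eta$ on the net, passing to an arbitrary unit vector leaves a cross term of size up to $\opnorm{S}/2$, which involves $\lambda_{\max}(S)$, i.e.\ the heavy-tailed side. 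This is exactly why \Cref{lem:ms_lb} uses $\opnorm{\scrA}\le\normF{\scrA}$ with a polynomially fine net and pays a logarithm. A log-free lower tail is available only after the upper tail is settled.
\item \emph{Upper tail.} The indicator $\mathbf 1\{\abs{\ip{X_i}{h}}\le\theta\}$ depends on $h$, so the truncated sum is not a quadratic form, and Bernstein on a net does not transfer to the sphere. The net reduction must be done on the full form before splitting, which puts the whole burden on the large part. There, a union bound over nets must absorb $\abs{\ip{X_i}{h-h_0}}\le\epsilon\norm{X_i}$ with $\norm{X_i}\approx\sqrt D$. That forces $\epsilon\lesssim\theta/\sqrt D$ and puts a logarithm into your additive $D$ term, which is precisely the loss the cited argument avoids.
\end{itemize}

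Your fallback is also wrong as stated, for three reasons.
\begin{itemize}
\item Truncating at the random level $\max_i\norm{X_i}^2$ is vacuous. Matrix Bernstein needs a deterministic level $R$, e.g.\ $R=\tau^2\rgt(d_1+d_2)$ from condition~\ref{assump:bdd} of \Cref{thm:ms_gen}, with the bias handled by the $q$-th moments.
\item The resulting bound contains $R\log D/n$, so you need $n\gtrsim\tau^2\rgt(d_1+d_2)\log(d_1+d_2)$. Rudelson-type bounds in terms of $\max_i\norm{X_i}^2$ give the same requirement. The logarithm therefore lands on the $\tau^2\rgt$ term, which in \Cref{thm:ms_gen} carries none: the existing $\log(d_1+d_2)$ multiplies only $B^4\ropt$ and comes from \Cref{lem:ms_lb}.
\item In the rank-one application, $\tau^2\rgt$ is of order $K_a^2K_b^2(\rgt+\log n)(1+\frac{\log n}{d_1+d_2})$. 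You would thus need roughly $n\gtrsim(\ropt+\log n)(d_1+d_2)\log(d_1+d_2)$, up to $K_a,K_b$ factors, e.g.\ $(d_1+d_2)\log^2(d_1+d_2)$ for $\ropt=1$. \Cref{thm:ms} as stated would not follow.
\end{itemize}
The log-free $\max_i\norm{X_i}^2/n$ term is exactly what the paper needs from the cited corollary.
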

The $1/D$ bound on the failure probability is the bottleneck for our probability bounds in this paper.
We believe that it could be improved (perhaps with stronger assumptions on $q$), but we do not investigate this at present.
In the case of the rank-1 measurements of \Cref{thm:ms}, we could instead use results for sub-exponential random vectors (e.g., those of \cite{Zhivotovskiy2024} with an extra truncation argument) to obtain a smaller failure probability bound.

\begin{proof}[Proof of \Cref{lem:ms_tansp_ub}]
	We use \Cref{lem:conc_tikh} with $D = \dim(\Tst)$, the identification of $\Tst$ with $\R^D$,
	and $X = \PTst(A)$.
	Note that $D \leq \rgt(d_1 + d_2)$, and, by assumption, with probability at least $1 - \frac{1}{d_1 + d_2}$,
	\[
		\max_i~\norm{X_i} = \max_i~\normF{\PTst(A_i)} \leq \tau \sqrt{\rgt(d_1 + d_2)}.
	\]
	We can furthermore take the $q$ and $B$ in \Cref{lem:conc_tikh} to be the same as in the moment condition of \Cref{thm:ms_gen}.
	Therefore, with probability (by a union bound, noting that $D \geq c (d_1 + d_2)$) at least $1 - \frac{C}{d_1 + d_2}$,
	we have
	\begin{align*}
		\opnorm*{ \frac{1}{n} \PTst \scrA^* \scrA \PTst - \scrI_{\Tst} }
		\leq C_q\parens*{ \frac{\tau^2 \rgt (d_1 + d_2) }{n} + B^2 \sqrt{\frac{\rgt(d_1 + d_2)}{n}} }.
	\end{align*}
	The result immediately follows.
\end{proof}


\bibliographystyle{siamplain}
\bibliography{refs}

\end{document}